\theoremstyle{plain}
\newcommand{\mcal}{\mathcal}
\newcommand{\mfr}{\mathfrak}
\newcommand{\mbb}{\mathbb}
\newcommand{\mrm}{\mathrm}
\newtheorem{theorem}{Theorem}[section]
\newtheorem{corollary}[theorem]{Corollary}
\newtheorem{lemma}[theorem]{Lemma}
\newtheorem{proposition}[theorem]{Proposition}
\newtheorem{conjecture}[theorem]{Conjecture}
\theoremstyle{definition}
\newtheorem{definition}[theorem]{Definition}
\newtheorem{remark}[theorem]{Remark}
\newtheorem{example}[theorem]{Example}
\newcommand{\cO}{\mathcal{O}}
\newcommand{\e}{\varepsilon}
\title{Non-existence of certain CM abelian varieties 
with prime power torsion}
\author{Yoshiyasu Ozeki\footnote{
Graduate School of Mathematics, 
Kyushu University, Fukuoka 819-0395, Japan.
\endgraf
e-mail: {\tt y-ozeki@math.kyushu-u.ac.jp}
\endgraf
Partly supported by the Grant-in-Aid for Research Activity Start-up, 
The Ministry of Education, Culture, Sports, Science and Technology, Japan.}}
\date{}
\begin{document}
\maketitle

\begin{abstract}
In this paper, 
we study a conjecture of Rasmussen and Tamagawa,
on the finiteness of the set of isomorphism classes of 
abelian varieties with constrained prime power torsion.
Our result is related with abelian  varieties 
which have complex multiplication over their fields of definition.
\end{abstract}


\section{Introduction}
Let $K$ be a finite extension of $\mbb{Q}$,
$\bar{K}$ an algebraic closure of $K$ 
and $G_K=\mrm{Gal}(\bar{K}/K)$ the absolute Galois group of $K$.
For a prime number $\ell$, 
we denote by $K(\mu_{\ell})$ the field generated by 
the $\ell$-th roots of unity.
We denote by $\mcal{A}(K,g,\ell)$
the set of $K$-isomorphism classes of 
$g$-dimensional abelian varieties $A$ over $K$ 
which satisfy the following:

\vspace{1mm}

(RT$_{\mathrm{\ell}}$)
      $K(A[\ell])$ is an $\ell$-extension of $K(\mu_{\ell})$.

\vspace{1mm}

(RT$_{\mathrm{red}}$) 
       The abelian variety $A$ has good reduction
       away from $\ell$ over $K$.

\vspace{1mm}

\noindent
It follows from the condition (RT$_{\mathrm{red}}$) and  
Faltings' result on the Shafarevich Conjecture that
$\mcal{A}(K,g,\ell)$ is a finite set.
Rasmussen and Tamagawa suggested that
such finiteness should hold if we take the union 
of these sets for $\ell$ varying over all primes.

\begin{conjecture}[\cite{RT}, Conjecture 1]
\label{RT}
The set $\mcal{A}(K,g):=\{([A],\ell)\mid 
[A]\in \mcal{A}(K,g,\ell),\ \ell:\mrm{prime\ number }\}$
is finite, that is, the set $\mcal{A}(K,g,\ell)$ is empty  
for any prime number $\ell$ large enough. 
\end{conjecture}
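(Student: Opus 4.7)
The plan is to extract rigid structural information from the mod-$\ell$ Galois representation $\bar\rho_{A,\ell}\colon G_K\to\mathrm{Aut}(A[\ell])\cong\mathrm{GL}_{2g}(\mbb{F}_\ell)$ attached to any $A$ representing a class in $\mcal{A}(K,g,\ell)$, and to argue that the combined conditions force a contradiction once $\ell$ is large. First, (RT$_\ell$) says that $\bar\rho_{A,\ell}(G_{K(\mu_\ell)})$ is an $\ell$-group, so it lands in a Sylow pro-$\ell$ subgroup of $\mathrm{GL}_{2g}(\mbb{F}_\ell)$. Semisimplifying, $\bar\rho_{A,\ell}^{\mrm{ss}}$ factors through $\mrm{Gal}(K(\mu_\ell)/K)\hookrightarrow(\mbb{Z}/\ell\mbb{Z})^{\times}$, hence splits as $\bigoplus_{i=1}^{2g}\bar\chi^{a_i}$, where $\bar\chi$ is the mod-$\ell$ cyclotomic character; the Weil pairing then forces the multiset $\{a_i\}$ to be stable under $a\mapsto 1-a$.

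Second, (RT$_{\mrm{red}}$) makes $\bar\rho_{A,\ell}$ unramified outside $\ell$, and at every place $v\mid\ell$ the good reduction of $A$ makes $\bar\rho_{A,\ell}\vert_{G_{K_v}}$ the mod-$\ell$ reduction of a crystalline representation with Hodge--Tate weights in $\{0,1\}$. For $\ell$ large compared to the ramification of $K/\mbb{Q}$, Fontaine--Laffaille theory together with the control of tame inertia weights \`a la Raynaud pins down each exponent to $a_i\in\{0,1\}\pmod{\ell-1}$. Combined with the Weil-pairing symmetry, $\bar\rho_{A,\ell}^{\mrm{ss}}\cong\mathbf{1}^{\oplus g}\oplus\bar\chi^{\oplus g}$, so $\bar\rho_{A,\ell}$ itself is an iterated extension of $\mathbf{1}$ by $\bar\chi$.

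To convert this mod-$\ell$ rigidity into non-existence, in the CM setting emphasised by the paper's title, I would fix a CM field $F\subseteq\mrm{End}_K^0(A)$ of degree $2g$ and encode $A$ by an algebraic Hecke character $\psi$ of the reflex field, with infinity type determined by the CM type $\Phi$. The preceding constraints translate into: the $\ell$-adic avatar of $\psi$ has conductor supported on primes above $\ell$, and its reduction modulo primes over $\ell$ is a power of $\bar\chi$. Only finitely many algebraic Hecke characters with fixed infinity type and such bounded conductor exist, and a closer analysis of the congruence modulo $\ell$ against the infinity type $\Phi$ should show incompatibility once $\ell$ is large, yielding emptiness of $\mcal{A}(K,g,\ell)$.

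The main obstacle is the local analysis at places $v\mid\ell$: one has to match the uniform mod-$\ell$ statement ``all Hodge--Tate exponents in $\{0,1\}$'' with the delicate infinity type of $\psi$, which requires a case analysis depending on how $\ell$ decomposes in $F$ (split, inert, or ramified) and on the compatibility of $\Phi$ with the decomposition and inertia groups at such $v$. A secondary technical issue is descent: CM might only be defined over some finite extension $L/K$, so one has to either pass to $L$ while controlling how (RT$_\ell$) and (RT$_{\mrm{red}}$) behave under base change, or else work directly over $K$ using only the $K$-rational endomorphism subring.
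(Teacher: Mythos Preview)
The statement you are addressing is Conjecture~\ref{RT}, which the paper records as an \emph{open conjecture} and does not prove. The paper's actual contribution is Theorem~\ref{abRT}, which treats only the subclass where $\rho_{A,\ell}$ has abelian image (condition (RT$_{\mrm{ab}}$)); Corollary~1.3 then covers abelian varieties with CM \emph{defined over $K$}. So there is no proof in the paper against which to compare a purported proof of the full conjecture, and your sketch cannot be ``correct'' for the stated target. Your first two paragraphs recover valid and known structural facts (essentially Lemma~\ref{structure}(1) plus the tame-inertia bound, cf.\ item~(iii) in Section~3), but these alone are not known to yield emptiness in general; the argument then explicitly retreats to the CM case, so at best you are aiming at the paper's corollary rather than the conjecture.

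Even restricted to CM, the proposal has a genuine gap: the sentence ``a closer analysis of the congruence modulo $\ell$ against the infinity type $\Phi$ should show incompatibility'' is the entire crux and is not carried out. The paper's route to Theorem~\ref{abRT} is organised quite differently and avoids any place-by-place analysis of how $\ell$ splits in a CM field. It fixes an auxiliary prime $\ell_0$, uses Raynaud's criterion to get good reduction over $K(A[12])$, passes to the $D_g$-th power $\rho_{A,\ell_0}^{D_g}$ so as to land in a \emph{finite} set of $\mbb{Q}$-integral semisimple $\ell_0$-adic representations by Faltings' trick (Lemma~\ref{Dsq}), and then---crucially invoking (RT$_{\mrm{ab}}$), which is \emph{not} assumed in the conjecture---extends one such representation to a compatible system satisfying the (RT$_\ell$)$'$ constraint for infinitely many $\ell$; Theorem~\ref{STR1}/Corollary~\ref{STRCor1} then forces it to be a sum of integer powers of the cyclotomic character, contradicting the Hodge--Tate weights $\{0,D_g\}$. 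Your Hecke-character idea is morally in the same circle (the paper itself leans on Khare's Theorem~\ref{Kh}), but the descent problem you flag---CM may only be defined over $\bar K$, so $\rho_{A,\ell}$ is only \emph{potentially} abelian---is precisely the obstruction the paper explicitly leaves open in the paragraph following Corollary~1.3.
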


\noindent
This conjecture is proved only in a few case.
For example,
Conjecture \ref{RT} in the case 
where $K$ is the rational number field or certain quadratic field,
with $g=1$ 
is proved by Rasmussen and Tamagwa in \cite{RT}. 
Their proof is based on results on $K$-rational points 
of modular curves of \cite{Ma} and \cite{Mo}.
Arguments for the moduli of algebraic points on Shimura curves (\cite{Ar}, \cite{AM}) also give 
results on Conjecture \ref{RT} for QM-abelian surfaces and certain quadratic field $K$. 

In this paper, 
we prove Conjecture \ref{RT}
for abelian varieties  in $\mcal{A}(K,g,\ell)$ 
which satisfy the condition that 
representations associated with their $\ell$-adic Tate modules are abelian. 
In fact, we prove more general result as follows:
Denote by $\mcal{A}'(K,g,\ell)_{\mrm{ab}}$
the set of $K$-isomorphism classes of 
$g$-dimensional abelian varieties $A$ over $K$ 
which satisfy the following:

\vspace{1mm}

(RT$_{\mathrm{\ell}}$)$'$ For some finite extension $L$ of $K$ 
         which is unramified at all places of $K$ above $\ell$,
         $L(A[\ell])$ is an $\ell$-extension of $L(\mu_{\ell})$.

\vspace{1mm}

(RT$_{\mathrm{ab}}$)  
The representation $\rho_{A,\ell}\colon G_K\to GL(T_{\ell}(A))$
associated with the $\ell$-adic Tate module $T_{\ell}(A)$ of $A$ has an abelian image.

\vspace{1mm}

\noindent
Our main result in this paper is 

\begin{theorem}
\label{abRT}
The  set $\mcal{A}'(K,g,\ell)_{\mrm{ab}}$ is empty  
for any prime number $\ell$ large enough. 
\end{theorem}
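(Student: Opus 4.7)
The plan is to reduce to the case of abelian varieties with complex multiplication defined over $K$, and then to derive a contradiction for large $\ell$ using the theory of algebraic Hecke characters.

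\emph{Step 1: Reduction to CM.} By Faltings' theorem, the $G_K$-action on $V_\ell(A) := T_\ell(A) \otimes_{\mathbb{Z}_\ell} \mathbb{Q}_\ell$ is semisimple, and the commutant of $\rho_{A,\ell}(G_K)$ in $\mathrm{End}_{\mathbb{Q}_\ell}(V_\ell(A))$ equals $\mathrm{End}_K^0(A) \otimes_\mathbb{Q} \mathbb{Q}_\ell$. A commutative semisimple subalgebra of $M_{2g}(\mathbb{Q}_\ell)$ acting faithfully has dimension at most $2g$, while a short eigenspace computation gives its commutant dimension at least $2g$; so (RT$_{\mathrm{ab}}$) forces $\dim_\mathbb{Q} \mathrm{End}_K^0(A) \geq 2g$. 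Applying this to the isogeny decomposition $A \sim \prod_i B_i^{n_i}$, together with Albert's bound $\dim_\mathbb{Q} \mathrm{End}_K^0(B_i) \leq 2\dim B_i$ for simple $B_i$, each $B_i$ has CM over $K$. Since (RT$_\ell$)$'$ and (RT$_{\mathrm{ab}}$) are inherited by each $B_i$ (with the same auxiliary $L$), one may assume $A$ is simple, $E := \mathrm{End}_K^0(A)$ is a CM field of degree $2g$, and $\rho_{A,\ell}$ factors through the torus $(\mathcal{O}_E \otimes \mathbb{Z}_\ell)^\times$.

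\emph{Step 2: Mod $\ell$ structure.} For all but finitely many $\ell$, the prime $\ell$ is unramified in $E$, so the finite group $(\mathcal{O}_E/\ell)^\times$ has order prime to $\ell$. Consequently $[L(A[\ell]):L]$ is prime to $\ell$ for every finite $L/K$. The Weil pairing gives $L(\mu_\ell) \subset L(A[\ell])$, while (RT$_\ell$)$'$ provides a finite $L/K$ unramified above $\ell$ such that $L(A[\ell])/L(\mu_\ell)$ is an $\ell$-extension. Both can hold only when $L(A[\ell]) = L(\mu_\ell)$, so $\bar\rho_{A,\ell}|_{G_L}$ factors through the mod-$\ell$ cyclotomic character.

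\emph{Step 3: Contradiction via Hecke characters.} By Serre--Tate CM theory, $\rho_{A,\ell}$ is governed by an algebraic Hecke character $\psi_A$ with fixed infinity type equal to the CM type $\Phi$ of $A$. Step 2 translates, via CM reciprocity, into the statement that $\bar\psi_A|_L$ agrees with the mod $\ell$ reduction of a Hecke character of infinity type a multiple of the norm $N_{L/\mathbb{Q}}$. Evaluating at Frobenii $\mathrm{Frob}_v$ at primes $v$ of good reduction of $A$ not above $\ell$ (which exist because $A$ acquires good reduction everywhere over a fixed extension), one obtains congruences $\psi_A(\mathrm{Frob}_v) \equiv N(v)^a \pmod{\ell}$ in $\mathcal{O}_E$, where both sides are algebraic integers with archimedean Weil-number bounds that are independent of $\ell$. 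For $\ell$ greater than an explicit bound, each congruence promotes to an actual equality in $\mathcal{O}_E$, which, applied on a sufficiently rich set of $v$, contradicts the incompatibility of the CM infinity type $\Phi$ with any multiple of $N_{L/\mathbb{Q}}$.

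The principal obstacle is Step 3: since the field $L$ in (RT$_\ell$)$'$ depends on $\ell$, and $\psi_A$ naturally lives on the reflex field of $(E,\Phi)$, the CM reciprocity law and the Weil-number estimates must be tracked carefully to promote the congruences to equalities uniformly as $\ell \to \infty$, and then to deduce the numerical contradiction with the CM type.
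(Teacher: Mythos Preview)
Your outline has a genuine structural gap that the paper's proof is specifically designed to overcome. The difficulty you flag yourself in the last paragraph is not a technical detail but the whole point: for each $\ell$ you produce a \emph{different} abelian variety $A$, hence a different CM field $E$, a different Hecke character $\psi_A$, and a different auxiliary field $L$. Your Step~2 already breaks here, since ``for all but finitely many $\ell$, the prime $\ell$ is unramified in $E$'' is meaningless when $E=E(\ell)$ varies; there is no bound on $\mathrm{disc}(E)$ in terms of $g$ alone. More seriously, in Step~3 the congruences $\psi_A(\mathrm{Frob}_v)\equiv N(v)^a\pmod{\ell}$ compare algebraic numbers that live in varying fields and involve exponents $a$ and residue degrees $f(w/v)$ that you have not bounded independently of $\ell$ (bounding $a$ would require semistability of $\rho_{A,\ell}$ at places above $\ell$, which you do not have since $A$ need not have good reduction there, and $K(A[12])/K$ need not be unramified above $\ell$). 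So the promotion from congruence to equality cannot be made uniform, and the argument does not close.

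The paper's proof avoids this by a pigeonhole step that your outline lacks. One fixes an auxiliary prime $\ell_0$, replaces $\rho_{A,\ell_0}$ by its $D_g$-th power so that it becomes unramified outside $\ell_0$ and $\mbb{Q}$-integral with bounded Frobenius weight (Raynaud's criterion plus the bound $[K(A[12]):K]\mid D_g$), and then invokes Faltings' finiteness of such $\ell_0$-adic representations (Lemma~\ref{Dsq}). Thus infinitely many of the varying $A$'s share the \emph{same} $\ell_0$-adic representation, hence the same abelian compatible system $(\rho_\ell)_\ell$. Now the constraints $(\mathrm{RT}_\ell)'$ at infinitely many $\ell$ become constraints on the residual representations of a \emph{fixed} system, and the structure theorem for abelian compatible systems (Theorem~\ref{STR1} / Corollary~\ref{STRCor1}, via Khare's result that such systems come from Hecke characters) forces $\rho_{\ell_0}|_{G_L}\simeq\bigoplus\chi_{\ell_0}^{m_j}$; comparing Frobenius weights gives $m_j=D_g/2$ for all $j$, contradicting the Hodge--Tate weights $\{0,D_g\}$. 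Your CM reduction in Step~1 is correct but ultimately plays no role: what matters is collapsing the family of $A$'s to a single compatible system before running any congruence argument.
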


\noindent
Keys to the proof of Theorem \ref{abRT}
are to construct a compatible system of Galois representations
which has a strong condition, Faltings' trick in his proof 
of  the Shafarevich Conjecture 
and Raynaud's criterion of semi-stable reduction. 
We hope that this study will be a first step to solve Conjecture \ref{RT}
for abelian  varieties with complex multiplication.
If an abelian variety $A$ over $K$ has complex multiplication over $K$
(in the sense of \cite{ST}, Section 4),
then it is well-known that $\rho_{A,\ell}$ is 
abelian (cf.\ {\it loc}.\ {\it cit}., Section 4, Corollary 2).
Thus we obtain 

\begin{corollary}
The set of
$K$-isomorphism classes 
of abelian varieties in $\mcal{A}(K,g,\ell)$ which have  complex multiplication over $K$
is empty for any prime $\ell$ large enough.
\end{corollary}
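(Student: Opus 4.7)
The plan is to observe that the corollary is a nearly immediate consequence of Theorem \ref{abRT}: it suffices to show that every CM (over $K$) abelian variety in $\mcal{A}(K,g,\ell)$ belongs to the larger set $\mcal{A}'(K,g,\ell)_{\mrm{ab}}$, because then the emptiness of the larger set for all $\ell$ large enough forces emptiness of the subset.

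First, I would take $[A]\in\mcal{A}(K,g,\ell)$ with complex multiplication over $K$ (in the sense of \cite{ST}, Section~4) and verify that $A$ satisfies the condition (RT$_{\mrm{ab}}$). This is precisely the classical fact cited in the introduction: by \cite{ST}, Section~4, Corollary~2, the Galois representation $\rho_{A,\ell}\colon G_K\to GL(T_\ell(A))$ attached to a CM abelian variety has abelian image. No further work is required for this step.

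Next I would verify that the condition (RT$_\ell$), which is part of the defining conditions for membership in $\mcal{A}(K,g,\ell)$, implies the weaker condition (RT$_\ell$)$'$. This is trivial: simply take $L=K$ in the definition of (RT$_\ell$)$'$; then $L$ is vacuously unramified at all places of $K$ above $\ell$, and $L(A[\ell])=K(A[\ell])$ is by hypothesis an $\ell$-extension of $K(\mu_\ell)=L(\mu_\ell)$. Consequently $[A]\in\mcal{A}'(K,g,\ell)_{\mrm{ab}}$.

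Finally I would invoke Theorem \ref{abRT}, which asserts that $\mcal{A}'(K,g,\ell)_{\mrm{ab}}=\varnothing$ for all primes $\ell$ sufficiently large. Since the set of CM (over $K$) abelian varieties in $\mcal{A}(K,g,\ell)$ injects into $\mcal{A}'(K,g,\ell)_{\mrm{ab}}$ by the preceding steps, it must also be empty for such $\ell$. There is no genuine obstacle here; the whole content of the corollary is packed into Theorem \ref{abRT}, and the corollary is really just a reformulation obtained by recognizing CM as a source of examples of the abelian image condition (RT$_{\mrm{ab}}$). The good reduction condition (RT$_{\mrm{red}}$) that appears in the definition of $\mcal{A}(K,g,\ell)$ is not needed in this deduction, since it is not required for $\mcal{A}'(K,g,\ell)_{\mrm{ab}}$.
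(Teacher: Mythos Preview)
Your proposal is correct and follows exactly the approach of the paper: the corollary is deduced directly from Theorem~\ref{abRT} by noting that complex multiplication over $K$ forces $\rho_{A,\ell}$ to have abelian image (\cite{ST}, Section~4, Corollary~2), so that any such $[A]\in\mcal{A}(K,g,\ell)$ lies in $\mcal{A}'(K,g,\ell)_{\mrm{ab}}$. The paper's own argument is just the one-line observation preceding the corollary.
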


\noindent
We want to replace ``an abelian image'' 
in the statement of (RT$_{\mathrm{ab}}$)   
with  ``a potential abelian image''.
Assume that  
we obtain Theorem \ref{abRT} with this replacement.
Under this assumption, we see that 
Conjecture \ref{RT}
holds for CM abelian varieties, that is,
if we denote by $\mcal{A}(K,g,\ell)_{\mrm{CM}}$  the set of $K$-isomorphism classes of 
abelian varieties in $\mcal{A}(K,g,\ell)$ which have complex multiplication over $\bar{K}$,
then the set 
\[
\mcal{A}(K,g)_{\mrm{CM}}:=\{([A],\ell)\mid 
[A]\in \mcal{A}(K,g,\ell)_{\mrm{CM}},\ \ell:\mrm{prime\ number }\}
\]

\noindent
is finite.

The paper proceeds as follows.
Section 2 is devoted to a study of compatible systems.
In Section 3, we recall some facts about 
Conjecture \ref{RT}. 
Finally we prove our main theorem in Section 4.


\section{Compatible systems}

In this section, we use same notation as given in Introduction.
To find conditions that compatible systems 
should be of a simple form is important for the proof of 
Theorem \ref{abRT}.

\subsection{Basic notions}
Let $E$ be a finite extension of $\mbb{Q}$.
For a finite place $\lambda$ of $E$,
we denote by $\ell_{\lambda}$ the 
prime number below $\lambda$, 
$E_{\lambda}$ the completion of $E$ at $\lambda$ and 
$\mbb{F}_{\lambda}$ the residue field of $\lambda$.
We denote by $E_{\lambda}$ (resp.\ $K_v$) the 
completion of $E$ at a finite place $\lambda$ of $E$
(resp.\ the completion of $K$ at a finite place $v$ of $K$).
Let $S$ be a finite set of finite places of $K$ and $T$ a finite set of finite places of $E$.
Put $S_{\ell}=S\cup \{\mrm{places\ of}\ K\ \mrm{above}\ \ell \}$.
A representation $\rho\colon G_K\to GL_n(E_{\lambda})$
is said to be {\it $E$-rational with ramification set $S$} if
$\rho$ is unramified outside $S_{\ell}$ and
the characteristic polynomial $\mrm{det}(T-\rho(\mrm{Fr}_v))$
of $\mrm{Fr}_v$ has coefficients in $E$
for each finite place $v\notin S_{\ell}$ of $K$,
where $\mrm{Fr}_v$ is an arithmetic Frobenius of $v$.

Now we give definitions of compatible systems of $\lambda$-adic (resp.\ mod $\lambda$) 
representations, which mainly follows from that in \cite{Kh1} and \cite{Kh2}.  
An {\it $E$-rational  strictly 
compatible system $(\rho_{\lambda})_{\lambda}$
of $n$-dimensional $\lambda$-adic representations of $G_K$
with defect set $T$ and ramification set $S$},
consists of, for each finite place $\lambda$ of $E$
not in $T$, a continuous representation
$
\rho_{\lambda}\colon G_K\to GL_n(E_{\lambda})
$
that is 
\begin{enumerate}
\item[(i)] $\rho_{\lambda}$ is unramified outside $S_{\ell_{\lambda}}$;

\item[(ii)] for any finite place $v\notin S$ of $K$,
     there exists a monic polynomial $f_v(T)\in E[T]$ such that
     for all places $\lambda\notin T$ of $E$ which is coprime to 
     the residue characteristic of $v$, 
     the characteristic polynomial 
     $\mrm{det}(T-\rho_{\lambda}(\mrm{Fr}_v))$ of $\mrm{Fr}_v$ 
     is equal to $f_v(T)$.
\end{enumerate} 
 
\noindent
An {\it $E$-rational  strictly 
compatible system $(\bar{\rho}_{\lambda})_{\lambda}$
of $n$-dimensional mod $\lambda$ representations of $G_K$
with defect set $T$ and ramification set $S$},
consists of, for each finite place $\lambda$ of $E$
not in $T$, a continuous representation
$
\bar{\rho}_{\lambda}\colon G_K\to GL_n(\mbb{F}_{\lambda})
$
that is 
\begin{enumerate}
\item[(i)] $\bar{\rho}_{\lambda}$ is unramified outside $S_{\ell_{\lambda}}$; 

\item[(ii)] for any finite place $v\notin S$ of $K$,
     there exists a monic polynomial $f_v(T)\in E[T]$ such that
     for all places $\lambda\notin T$ of $E$ which is coprime to 
     the residue characteristic of $v$, 
     $f_v(T)$ is integral at $\lambda$ and 
     the characteristic polynomial 
     $\mrm{det}(T-\rho_{\lambda}(\mrm{Fr}_v))$ of $\mrm{Fr}_v$ 
     is the reduction of $f_v(T)$ mod $\lambda$.
\end{enumerate}

\noindent
We will often suppress the sets $S$ and $T$ from the notations. 
\begin{example}
Let $X$ be a proper smooth variety over $K$.
Let $V_{\ell}:=H^r_{\mrm{\acute{e}t}}(X_{\bar K}, \mbb{Q}_{\ell})^{\vee}$
be the dual of the $\ell$-adic \'etale cohomology group  
$H^r_{\mrm{\acute{e}t}}(X_{\bar K}, \mbb{Q}_{\ell})$ of $X$. 
Then the system $(V_{\ell})_{\ell}$ is a strict compatible system 
whose defect set is all prime numbers and 
ramification set is the set of finite places $v$ of $K$
such that $X$ has bad reduction at $v$.
This fact follows from the Weil Conjecture 
which is proved by Deligne (cf.\ \cite{De1}, \cite{De2}). 
\end{example}

It is conjectured that every $E$-rational strictly 
compatible system arises motivically.

\begin{conjecture}[\cite{Kh1}, Conjecture 1]
\label{KhC}
Any $E$-rational strictly 
compatible system of $\lambda$-adic (resp.\ mod $\lambda$) representations arises motivically.
\end{conjecture}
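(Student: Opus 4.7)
The plan is to attack this via the philosophy of the Fontaine--Mazur conjecture: produce from the compatible system $(\rho_\lambda)_\lambda$ a single motive $M$ over $K$ with coefficients in $E$ whose $\lambda$-adic realizations recover the $\rho_\lambda$ up to equivalence. The first move is to upgrade the purely Galois-theoretic input into Hodge-theoretic input. Namely, I would first argue (or assume as an implicit part of ``strict compatibility'' in the intended formulation) that each $\rho_\lambda$ is de Rham at every place $v$ of $K$ above $\ell_\lambda$ with Hodge--Tate weights independent of $\lambda$, and that the Weil--Deligne representations at bad places outside $S_{\ell_\lambda}$ are pure and agree with a common Weil--Deligne representation obtained from the polynomial $f_v(T)$ of condition (ii). Without such a geometricity input there is no hope of producing a motive from $\rho_\lambda$ alone, so this preparatory step is essential.

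Next, I would invoke the Fontaine--Mazur conjecture for each individual $\rho_\lambda$ to obtain a motive $M_\lambda$ over $K$ (with coefficients in $E$) whose $\lambda$-adic \'etale realization is $\rho_\lambda$. The compatibility of characteristic polynomials $\mrm{det}(T-\rho_\lambda(\mrm{Fr}_v))=f_v(T)$ at almost all $v\notin S$ then forces the $M_\lambda$ to share the same Frobenius data, and I would use a Tannakian reconstruction argument (together with Faltings' semisimplicity and Tate-type isogeny results applied to the motivic Galois groups of the $M_\lambda$) to descend the family $\{M_\lambda\}$ to a single motive $M$ whose $\lambda$-adic realization matches $\rho_\lambda$ for every $\lambda \notin T$.

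For the mod-$\lambda$ statement, I would aim to reduce to the $\lambda$-adic case by lifting each $\bar{\rho}_\lambda$ to a de Rham characteristic-zero representation $\rho_\lambda$ via Galois deformation theory (Ramakrishna-style lifting with prescribed local conditions), and then show that the lifts can be chosen compatibly so that $(\rho_\lambda)_\lambda$ itself forms a strictly compatible $E$-rational system whose reductions recover $(\bar{\rho}_\lambda)_\lambda$.

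The hard part, by a very wide margin, is Step 2: the Fontaine--Mazur conjecture is itself open and known only in quite restricted settings (essentially dimension two over $\mbb{Q}$ by work of Kisin and Emerton, and certain regular self-dual cases via the potential automorphy theorems of Clozel--Harris--Taylor, Barnet-Lamb--Gee--Geraghty--Taylor, etc.). An alternative, perhaps more tractable, route is to go directly through potential automorphy: establish that the compatible system becomes automorphic after a soluble base change, and then use the motivic origin of automorphic Galois representations of the relevant type. Either way, the proposal does not so much \emph{prove} Conjecture \ref{KhC} as reduce it to a bundle of deep open conjectures in the Langlands program, with the residual technical difficulty being the Tannakian descent that glues the family $\{M_\lambda\}$ into a single motive $M$.
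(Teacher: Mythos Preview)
The statement you are attempting to prove is not a theorem in the paper; it is explicitly labeled a \emph{Conjecture} (cited from \cite{Kh1}), and the paper makes no attempt to prove it. The only case in which the paper asserts the truth of this conjecture is the abelian semisimple case, which is handled by Theorem~\ref{Kh} (Khare's result) and used as a black box. So there is no ``paper's own proof'' to compare against.

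As you yourself concede in the final paragraph, your proposal does not constitute a proof: it reduces the conjecture to the Fontaine--Mazur conjecture together with a Tannakian gluing argument, and Fontaine--Mazur is open in the generality needed here. Moreover, even granting Fontaine--Mazur for each $\rho_\lambda$ individually, the step where you ``descend the family $\{M_\lambda\}$ to a single motive $M$'' is not a routine application of Faltings or Tate-type results; the existence of a single motive whose realizations match a prescribed compatible system is essentially the content of the conjecture itself, and invoking ``Tannakian reconstruction'' here is circular without a much more detailed argument. The mod-$\lambda$ lifting step is similarly conjectural in this generality: Ramakrishna-type lifting with prescribed local conditions is not known to produce compatible lifts across all $\lambda$ simultaneously. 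In short, the proposal is an accurate sketch of how one \emph{expects} the conjecture to fit into the Langlands/Fontaine--Mazur framework, but it is not a proof, and the paper does not claim one.
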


\noindent
In fact, this conjecture is true if representations are abelian. 

\begin{theorem}[\cite{Kh2}, Theorem 2 and Corollary 1]
\label{Kh}
An $E$-rational strictly compatible system of abelian semisimple 
$\lambda$-adic (resp. mod $\lambda$) representations of $G_K$ 
arises from $n$ Hecke characters.
\end{theorem}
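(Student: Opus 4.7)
The plan is to reduce to the case of one-dimensional characters and then invoke Serre's theory of locally algebraic abelian $\ell$-adic representations developed in \emph{Abelian $\ell$-adic representations and elliptic curves}. Since each $\rho_\lambda$ has abelian semisimple image, after enlarging $E$ we may diagonalize $\rho_\lambda$ into $n$ one-dimensional characters $\chi_{i,\lambda}\colon G_K\to\bar{E}_\lambda^{\times}$. The hypothesis that the characteristic polynomials $f_v(T)\in E[T]$ are $\lambda$-independent at unramified $v$ forces the multiset $\{\chi_{1,\lambda}(\mathrm{Fr}_v),\dots,\chi_{n,\lambda}(\mathrm{Fr}_v)\}$ to be $\lambda$-independent. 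A Chebotarev-type matching argument (after a controlled finite extension of $K$ to separate the characters globally) organizes these into $n$ strictly compatible systems of one-dimensional representations.

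Via global class field theory, each family $(\chi_{i,\lambda})_\lambda$ becomes a compatible family of continuous $\lambda$-adic characters of the idele class group $\mathbb{A}_K^{\times}/K^{\times}$, with conductor controlled by the ramification set $S$. The remaining task is to identify this family with the $\lambda$-adic realization of a single algebraic Hecke character (Grossencharacter of type $A_0$). By Serre's locally algebraic criterion, once one knows each $\chi_{i,\lambda}$ is locally algebraic at primes above $\ell_\lambda$, i.e.\ Hodge--Tate with integer weights $(a_\sigma)$ independent of $\lambda$, Weil's theorem produces the desired Hecke character, and uniqueness is forced by the compatibility of Frobenius traces plus Chebotarev.

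The main obstacle is establishing this local algebraicity at $\ell_\lambda$. The only arithmetic input is the algebraicity of Frobenius eigenvalues at good primes, and one must bootstrap this into a statement about inertia at $\ell_\lambda$. Serre's argument exploits the fact that these eigenvalues, being algebraic with controlled archimedean and $v$-adic absolute values, together with the density of Frobenius classes, force the restriction of $\chi_{i,\lambda}$ to an open subgroup of local units at each $v\mid\ell_\lambda$ to take the algebraic form $u\mapsto\prod_\sigma \sigma(u)^{a_\sigma}$. For the mod $\lambda$ variant (Corollary 1 of \cite{Kh2}), one cannot invoke Hodge--Tate theory directly; instead one analyses the action on inertia at $\ell_\lambda$ via Raynaud-type classification of finite flat group schemes and takes a limit over $\lambda$ before applying the same global identification with a Hecke character.
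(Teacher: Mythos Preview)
The paper does not give its own proof of this statement: Theorem~\ref{Kh} is quoted from \cite{Kh2} (Theorem~2 and Corollary~1 there) and used as a black box, so there is nothing in the paper to compare your proposal against.

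On the merits of your sketch, the overall shape (pass to characters via class field theory, establish local algebraicity, then invoke Weil's classification of type-$A_0$ Hecke characters) is the right one, but two of your steps are genuine gaps rather than routine details. First, the ``matching'' step: diagonalising each $\rho_\lambda$ separately gives an unordered multiset of characters, and compatibility only pins down the multiset of Frobenius eigenvalues, not a labelling. Your Chebotarev matching argument is exactly the hard part, and in fact Khare does not proceed this way; one instead shows the full representation is locally algebraic and hence factors through a Serre group $S_{\mathfrak m}$, after which the decomposition into Hecke characters comes for free from the structure of $S_{\mathfrak m}$. Second, and more seriously, your mechanism for local algebraicity invokes ``controlled archimedean and $v$-adic absolute values'' of Frobenius eigenvalues, but no weight or purity hypothesis is present in the statement. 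The only input is that the eigenvalues lie in the fixed number field $E$; turning this into local algebraicity at places above $\ell_\lambda$ requires either the transcendence results of Waldschmidt/Henniart (as in Serre's Chapter~III) or a careful use of the compatibility across $\lambda$ to control the image of local units. That is the real content of the theorem, and your paragraph does not supply it. The mod~$\lambda$ case you sketch at the end (Raynaud-type classification) likewise needs a semistability or finite-flatness hypothesis you have not arranged.
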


For a finite place $v$ of $K$,
we denote by $G_v$ a decomposition group of $v$
and $I_v$ the inertia subgroup of $G_v$.
An {\it inertial level $\mfr{L}$ of $K$} is 
a collection $(\mfr{L}_v)_v$ of open normal 
subgroups $\mfr{L}_v$ of $I_v$ 
for each finite place $v$ of $K$ 
such that $\mfr{L}_v=I_v$ for almost all $v$.
An {\it inertial level $\mfr{L}$ of a geometric 
$\lambda$-adic representation $\rho_{\lambda}$ of
$G_K$}, where we use the notion of {\it geometric} in the sense of \cite{FM},
is 
the collection $(\mfr{L}_v(\rho_{\lambda}))_v$ 
of open normal 
subgroups $\mfr{L}_v(\rho_{\lambda})$ of  $I_v$ 
for each finite place $v$ of $K$, 
where $\mfr{L}_v(\rho_{\lambda})$
is the largest open subgroup of $I_v$ such that 
the restriction of $\rho_{\lambda}$ to 
$\mfr{L}_v(\rho_{\lambda})$  is semi-stable.
By definition,  we have $\mfr{L}_v(\rho_{\lambda})=I_v$ 
for almost all $v$.
A compatible system $(\rho_{\lambda})_{\lambda}$ of
geometric $\lambda$-adic representations of $G_K$ has 
{\it bounded inertial level} 
if there exists an inertial level 
$\mfr{L}=(\mfr{L}_v)_v$
such that $\mfr{L}_v\subset \mfr{L}_v(\rho_{\lambda})$
for all $\lambda$ and $v$.
Let $w_1,w_2,\dots ,w_n$ be integers.
A $\lambda$-adic representation $\rho_{\lambda}$ is 
{\it $E$-rational with Frobenius weights 
$w_1,w_2,\dots ,w_n$ outside $S$}
if $\rho_{\lambda}$ is $E$-rational with ramification set $S$ and 
for all finite places $v\notin S_{\ell}$ of $K$,
the complex roots of 
the characteristic polynomial 
$\mrm{det}(T-\rho(\mrm{Fr}_v))$  of $\mrm{Fr}_v$,
for a chosen embedding of $E$ into $\mbb{C}$,
have their complex absolute values 
$q^{w_1/2}_v,q^{w_2/2}_v,\dots ,q^{w_n/2}_v$
where $q_v$ is the cardinality of the residue field of $v$.
A strict compatible system 
$(\rho_{\lambda})_{\lambda}$
is said to be 
{\it $E$-rational strict compatible system 
with Frobenius weights $w_1,w_2,\dots ,w_n$} 
if each $\rho_{\lambda}$ is $E$-rational with 
Frobenius weights $w_1,w_2,\dots ,w_n$ outside 
a ramification set of $(\rho_{\lambda})_{\lambda}$.
We call $w_1,w_2,\dots ,w_n$ 
the Frobenius weights of $\rho_{\lambda}$ 
(resp.\ $(\rho_{\lambda})_{\lambda}$), and 
$\rho_{\lambda}$ (resp.\ $(\rho_{\lambda})_{\lambda}$) is said to be {\it pure} if $w_1=w_2=\cdots =w_n$. 
A compatible system $(\rho_{\lambda})_{\lambda}$ of
geometric $\lambda$-adic representations of $G_K$ has 
{\it bounded Hodge-Tate weights}
if there exist integers $a$ and $b$ with $a\le b$ such that, 
for any $\lambda$ and finite place $v$ of $K$ 
above $\ell_{\lambda}$,
all the Hodge-Tate weights of $\rho|_{G_v}$ 
viewed as a $\mbb{Q}_{\ell}$-representation are in $[a,b]$.
Finally, a compatible system $(\bar{\rho}_{\lambda})_{\lambda}$ of
mod $\lambda$ representations of $G_K$ is of
{\it bounded Artin conductor} if
there exists an ideal $\mfr{N}$ of $K$ such that,
for any $\lambda$,
the Artin conductor of 
$\bar{\rho}_{\lambda}$ divides $\mfr{N}$.

\begin{proposition}
\label{PropHecke}
$(1)$ An $E$-rational strictly compatible system 
$(\rho_{\lambda})_{\lambda}$ of
abelian semisimple $\lambda$-adic representations of $G_K$ 
has bounded inertial level and bounded Hodge-Tate weights.

\noindent
$(2)$ An $E$-rational strictly compatible system 
$(\rho_{\lambda})_{\lambda}$ of
abelian semisimple mod $\lambda$ representations of $G_K$ 
is of bounded Artin conductor.
\end{proposition}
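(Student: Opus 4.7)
My plan is to invoke Theorem \ref{Kh} to reduce both statements to properties of Hecke characters. After possibly enlarging $E$ to split the semisimple compatible system into characters, that theorem yields an identification $\rho_\lambda \cong \bigoplus_{i=1}^n \chi_i^{(\lambda)}$ (resp.\ $\bar\rho_\lambda \cong \bigoplus_{i=1}^n \bar\chi_i^{(\lambda)}$), where each $\chi_i$ is an algebraic Hecke character of $K$ with fixed conductor $\mfr{f}_i$ and fixed infinity type. All three notions (inertial level, Hodge--Tate weights, Artin conductor) behave well under direct sums: the required bounds survive after intersecting inertial subgroups, taking the union of Hodge--Tate weight sets, and multiplying ideals. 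Hence it suffices to prove each statement for a single Hecke character $\chi$.

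For (1), I would analyze $\chi^{(\lambda)}|_{G_v}$ at each finite place $v$ of $K$. When $v \nmid \ell_\lambda$, local class field theory shows that $\chi^{(\lambda)}|_{I_v}$ factors through a fixed finite quotient of $I_v$ determined by the local conductor of $\chi$ at $v$; in particular $\chi^{(\lambda)}$ is unramified (hence semi-stable) on an open subgroup $\mfr{L}_v \subset I_v$ independent of $\lambda$. When $v \mid \ell_\lambda$, the $p$-adic Hodge theory of Hecke characters gives that $\chi^{(\lambda)}|_{G_v}$ is de Rham with Hodge--Tate weights prescribed by the infinity type of $\chi$ (independent of $\lambda$), and becomes crystalline after a finite extension of $K_v$ whose ramification is controlled by the local data of $\chi$ at $v$. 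Since only finitely many $\lambda$ lie above a given rational prime and only finitely many $v$ divide $\mfr{f}_1\cdots\mfr{f}_n$, intersecting the corresponding open subgroups yields a uniform inertial level, and the Hodge--Tate bound is immediate.

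For (2) the same reduction applies. The Artin conductor of $\bar\chi^{(\lambda)}$ at $v \nmid \ell_\lambda$ divides that of $\chi^{(\lambda)}$, and hence divides $\mfr{f}$. At $v \mid \ell_\lambda$ one needs a bound on the conductor of the mod $\lambda$ reduction of a potentially crystalline representation in terms of its Hodge--Tate weights and local conductor alone; such estimates follow from $p$-adic Hodge-theoretic ramification bounds (in the style of Fontaine or Abbes--Saito). Multiplying these local contributions over the finitely many bad places produces an ideal $\mfr{N}$ of $K$ divisible by the Artin conductor of every $\bar\rho_\lambda$.

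The main technical obstacle is the uniform control at places $v \mid \ell_\lambda$ in both parts, because $\ell_\lambda$ itself moves with $\lambda$ and one cannot reduce to finitely many local computations. The resolution depends on the Serre--Tate description of the abelian $\ell$-adic representations attached to an algebraic Hecke character, which pins the local invariants at residue characteristic $\ell_\lambda$ to the fixed arithmetic data (conductor and infinity type) rather than to $\lambda$ itself.
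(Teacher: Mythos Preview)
Your approach is essentially identical to the paper's: invoke Theorem~\ref{Kh} to realize the system as coming from a finite collection of Hecke characters, then read off the required uniform bounds from the fixed arithmetic data (conductor and infinity type) of those characters. The paper's proof is in fact just a two-sentence appeal to ``standard properties of a representation arising from Hecke characters,'' and your proposal is a correct expansion of what those standard properties are.
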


\begin{proof}
By Theorem \ref{Kh}, such $(\rho_{\lambda})_{\lambda}$ 
arises from Hecke characters.
Hence the Proposition follows from standard properties of 
a representation arising from Hecke characters.
\end{proof}

\subsection{Structures of certain compatible systems}

Choose an algebraic closure $\bar{\mbb{F}}_{\lambda}$ of
$\mbb{F}_{\lambda}$. 
Put $\chi_{\lambda}\colon G_K
\overset{\chi_{\ell_{\lambda}}}{\longrightarrow} 
\mbb{Z}_{\ell_{\lambda}}^{\times}
\hookrightarrow E_{\lambda}^{\times}$
and 
$\bar{\chi}_{\lambda}\colon G_K
\overset{\bar{\chi}_{\ell_{\lambda}}}{\longrightarrow} 
\mbb{F}_{\ell_{\lambda}}^{\times}
\hookrightarrow \mbb{F}_{\lambda}^{\times}$, 
where $\chi_{\ell_{\lambda}}$ and 
 $\bar{\chi}_{\ell_{\lambda}}$ are 
the $\ell_{\lambda}$-adic cyclotomic character and 
the mod $\ell_{\lambda}$ cyclotomic character,
respectively.
For a representation 
$\bar{\rho}_{\lambda}\colon G_K\to GL_n(\mbb{F}_{\lambda})$
with abelian semisimplification,
Schur's lemma shows that
$(\bar{\rho}_{\lambda})^{\mrm{ss}}\otimes \bar{\mbb{F}}_{\lambda}$
conjugates to the direct sum of $n$ characters,
where the subscript ``ss'' means the semisimplification,
and we call these $n$ characters 
{\it characters associated with $\bar{\rho}_{\lambda}$}.
For a $\lambda$-adic representation $\rho_{\lambda}$,
we denote by 
$\bar{\rho}_{\lambda}$
a residual representation of $\rho_{\lambda}$ 
(for a chosen lattice).
Note that the isomorphism class of 
$(\bar{\rho}_{\lambda})^{\mrm{ss}}$ 
is independent of the choice of a lattice
by the Brauer-Nesbitt theorem.
 
\begin{theorem} 
\label{STR1} 
Let $(\rho_{\lambda})_{\lambda}$ be an 
$E$-rational strictly compatible system
of $n$-dimensional geometric semisimple $\lambda$-adic representations 
of $G_K$.
Suppose that 
there exists an infinite set $\Lambda$ 
of finite places of $E$ which satisfies the following:
\begin{enumerate}
\item[$(1)$] For any $\lambda\in \Lambda$, there exists a place $v$
of $K$ above $\ell_{\lambda}$ such that

\begin{enumerate}
\item[$(a)$] $\rho_{\lambda}$ is semi-stable at  $v$.
\item[$(b)$] there exist integers $w_1\le w_2$ which are independent 
of the choice of $\lambda\in \Lambda$ such that the Hodge-Tate weights 
of $\rho_{\lambda}|_{G_v}$ are in $[w_1,w_2]$.
\end{enumerate}

\item[$(2)$] For any $\lambda\in \Lambda$, 
$(\bar{\rho}_{\lambda})^{\mrm{ss}}$ is abelian and 
any character associated with 
$\bar{\rho}_{\lambda}$
has the form $\e\bar{\chi}^a_{\lambda}$,
where 
$a$ is an integer and 
$\e\colon G_K\to \bar{\mbb{F}}_{\lambda}^{\times}$ is a character 
unramified at all places of $K$ above $\ell_{\lambda}$.

\item[$(3)$] The Artin conductor of $(\bar{\rho}_{\lambda})^{\mrm{ss}}$
is bounded independently of the choice of $\lambda\in \Lambda$.
\end{enumerate}
Then there exist integers $m_1, m_2,\dots , m_n$ 
and a finite extension $L$ of $K$
such that,
for any $\lambda$,
the representation $\rho_{\lambda}$ is isomorphic to 
$\chi_{\lambda}^{m_1}\oplus \chi_{\lambda}^{m_2}
\oplus \cdots \oplus \chi_{\lambda}^{m_n}$ 
on $G_L$.
\end{theorem}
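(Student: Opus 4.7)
The plan is to show that, after passing to an infinite subset $\Lambda'\subset\Lambda$ and restricting to a common finite extension $L/K$, the semisimple mod-$\lambda$ representations $(\bar\rho_\lambda)^{\mrm{ss}}|_{G_L}$ all collapse to $\bigoplus_{i=1}^n \bar\chi_\lambda^{m_i}$ for a single tuple $(m_1,\dots,m_n)$, and then to propagate this back to the $\lambda$-adic level for every $\lambda$ using strict compatibility together with the Brauer--Nesbitt theorem and Chebotarev density. To begin, I would uniformize the non-cyclotomic parts $\epsilon_{i,\lambda}$ of the characters $\epsilon_{i,\lambda}\bar\chi_\lambda^{a_{i,\lambda}}$ supplied by condition $(2)$. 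Since $\bar\chi_\lambda^{a}$ is ramified only above $\ell_\lambda$ and $\epsilon_{i,\lambda}$ is unramified above $\ell_\lambda$, the multiplicativity of Artin conductors on direct sums together with condition $(3)$ forces each $\epsilon_{i,\lambda}$ to have Artin conductor dividing a fixed ideal $\mfr{N}_0$ of $K$. Class field theory then gives a fixed integer $h$ with $\epsilon_{i,\lambda}^{h}=1$ for all $i,\lambda$; once $\ell_\lambda>h$, the group of $h$-th roots of unity in $\bar{\mbb{F}}_\lambda^\times$ is canonically identified with $\mu_h\subset\bar{\mbb{Q}}^\times$, so the collection of possible $\epsilon_{i,\lambda}$ becomes a single finite set of characters of $G_K$. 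Letting $L_0/K$ be their joint kernel field, $\epsilon_{i,\lambda}|_{G_{L_0}}=1$ for all sufficiently large $\lambda\in\Lambda$.

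Next, at the place $v$ from condition $(1)$, the inertia characters $\bar\chi_\lambda^{a_{i,\lambda}}|_{I_v}$ arise from a semistable $G_v$-representation with Hodge--Tate weights in $[w_1,w_2]$; integral $p$-adic Hodge theory then forces $a_{i,\lambda}$ to be congruent modulo $\ell_\lambda-1$ to an integer in $[w_1,w_2]$, and once $\ell_\lambda>w_2-w_1+1$ that representative is unique, so we may take $a_{i,\lambda}\in[w_1,w_2]\cap\mbb{Z}$. The unordered tuple $\{(a_{i,\lambda},\epsilon_{i,\lambda})\}_{i=1}^n$ then lies in a fixed finite set for large $\ell_\lambda$, and a pigeonhole argument produces an infinite $\Lambda'\subset\Lambda$ and fixed data $\{(m_i,\epsilon_i)\}_{i=1}^n$ with $(\bar\rho_\lambda)^{\mrm{ss}}\cong\bigoplus_i\epsilon_i\bar\chi_\lambda^{m_i}$ for $\lambda\in\Lambda'$, whence $(\bar\rho_\lambda)^{\mrm{ss}}|_{G_{L_0}}\cong\bigoplus_i\bar\chi_\lambda^{m_i}$. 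To lift to the $\lambda$-adic level I would fix a finite place $v$ of $L_0$ outside the ramification set and not above $\ell_\lambda$; then $t_v:=\mrm{tr}(\rho_\lambda(\mrm{Fr}_v))\in E$ is independent of $\lambda$ by strict compatibility, while $t_v\equiv\sum_i q_v^{m_i}\pmod\lambda$ for every $\lambda\in\Lambda'$. Infinitely many such congruences force $t_v=\sum_i q_v^{m_i}$ in $E$; applying the same argument to every coefficient of the characteristic polynomial yields $\det(T-\rho_\lambda(\mrm{Fr}_v))=\prod_i(T-q_v^{m_i})$ for every $\lambda$ and every such $v$. Chebotarev density and Brauer--Nesbitt then give $\rho_\lambda|_{G_{L_0}}^{\mrm{ss}}\cong\bigoplus_i\chi_\lambda^{m_i}$ for all $\lambda$, and since $\rho_\lambda$ is semisimple on $G_K$ and $G_{L_0}\subset G_K$ has finite index, Clifford's theorem in characteristic zero ensures the restriction is already semisimple, yielding the desired isomorphism with $L=L_0$.

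The main obstacle I expect is the $p$-adic Hodge-theoretic step: translating the Hodge--Tate bound and semistability in condition $(1)$ into an honest integer constraint $a_{i,\lambda}\in[w_1,w_2]$ rather than a bound only modulo $\ell_\lambda-1$, which is what makes the pigeonhole work. This is precisely where integral $p$-adic Hodge theory (Fontaine--Laffaille, or Breuil's theory in the semistable case) enters, and is why the conclusion can only be extracted uniformly once $\ell_\lambda$ is large, then spread to every $\lambda$ by strict compatibility. A secondary subtlety is ensuring in the first step that the conductor bound from condition $(3)$, which could a priori be absorbed into the $\bar\chi_\lambda^{a_{i,\lambda}}$-factor, actually controls $\epsilon_{i,\lambda}$ globally; the unramifiedness clause of condition $(2)$ is exactly what is needed.
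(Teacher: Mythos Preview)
Your proposal is correct and follows essentially the same route as the paper: kill the $\varepsilon$-twists by passing to a fixed finite abelian extension using the conductor bound (the paper does this by replacing $K$ with the strict ray class field of modulus $\mfr{n}$, which amounts to your $L_0$), bound the cyclotomic exponents via integral $p$-adic Hodge theory (the paper cites Caruso's semistable tame-inertia-weight bound, which is exactly the ``Breuil in the semistable case'' input you invoke), and then use strict compatibility at Frobenii to turn mod-$\lambda$ congruences into equalities in $E[T]$ and propagate to all $\lambda$. The only cosmetic differences are that the paper pins down $\{m_i\}$ directly from the characteristic polynomial at a single fixed place $v_0$ rather than by pigeonhole, and it is explicit about descending from $\mbb{F}_\lambda$ to $\mbb{F}_{\ell_\lambda}$ (requiring $\ell_\lambda>[E:\mbb{Q}]\cdot n$) before applying the tame-inertia bound---a technical point you should make precise when writing this up.
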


\begin{remark}
If Conjecture \ref{KhC} holds, 
then we can remove the conditions (1) and (3) of Theorem \ref{STR1} 
since these conditions are automatically satisfied. 
\end{remark}

\begin{proof}[Proof of Theorem \ref{STR1}]
By replacing $\Lambda$ with its infinite subset,  
we may suppose that
$\ell_{\lambda}$ does not divide the 
discriminant of $K$ and $\ell_{\lambda}>[E:\mbb{Q}]\cdot n$
for any $\lambda\in\Lambda$.
Furthermore, 
we may assume that, for any $\lambda\in \Lambda$ and 
a finite place $v$ of $K$ 
above $\ell_{\lambda}$ as in the condition (1),
the Hodge-Tate weights of
$\rho_{\lambda}|_{G_v}$ viewed as a 
$\mbb{Q}_{\ell_{\lambda}}$-representation are positive and 
bounded independently of the choice of $\lambda\in \Lambda$. 
By the condition (3),
there exists an ideal $\mfr{n}$ of $\cO_K$ 
such that, for any $\lambda\in \Lambda$,
the Artin conductor outside $\ell_{\lambda}$
of $(\bar{\rho}_{\lambda})^{\mrm{ss}}$ divides $\mfr{n}$.
If we denote by $\psi$ a character associated with  
$(\bar{\rho}_{\lambda})^{\mrm{ss}}$ for $\lambda\in \Lambda$
and decompose 
$\psi=\e\bar{\chi}^a_{\lambda}$ 
where $\e$ is as the condition (2),
then the Artin conductor outside $\ell_{\lambda}$
of $\varepsilon$ also divides $\mfr{n}$.
Hence, replacing the field $K$ with 
the strict ray class field of $K$ associated with $\mfr{n}$,
we may replace the condition
(2) with the following condition (2)$'$:
\begin{enumerate}
\item[(2)$'$] For any $\lambda\in \Lambda$, 
$(\bar{\rho}_{\lambda})^{\mrm{ss}}$ is abelian and 
any character associated with 
$\bar{\rho}_{\lambda}$
has the form $\bar{\chi}^a_{\lambda}$.
\end{enumerate}

\noindent
Now take any $\lambda\in \Lambda$.
Let $\bar{\chi}^{a_{\lambda,1}}_{\lambda}, 
\bar{\chi}^{a_{\lambda,2}}_{\lambda}, \dots, 
\bar{\chi}^{a_{\lambda,n}}_{\lambda}$
be all the characters associated with $\bar{\rho}_{\lambda}$. 
By the condition (2)$'$ and $\ell_{\lambda}>[E:\mbb{Q}]\cdot n$,
the representation $(\bar{\rho}_{\lambda})^{\mrm{ss}}$ 
conjugates\footnote{Here we use the following fact:
Let $\mbb{F}$ be a field of characteristic $\ell>0$.
Let $\rho$ and $\rho'$ be $n$-dimensional semisimple $\mbb{F}$-representation of a group $G$.
Assume that $\ell>n$. 
If $\mrm{det}(T-\rho(g))=\mrm{det}(T-\rho'(g))$ for any $g\in G$,
then $\rho$ is isomorphic to $\rho'$. 
}
to the direct some of $n$ characters
(over $\mbb{F}_{\lambda}$)
of the form $\bar{\chi}^a_{\lambda}$,
which has values in $\mbb{F}^{\times}_{\ell_{\lambda}}$. 
Hence if we regard the $\mbb{F}_{\lambda}$-representation 
$\bar{\rho}_{\lambda}$ as an 
$\mbb{F}_{\ell_{\lambda}}$-representation,
its semisimplification is of a diagonal form whose
diagonal components are the copies of 
$\bar{\chi}^{a_{\lambda,1}}_{\ell_{\lambda}}, 
\bar{\chi}^{a_{\lambda,2}}_{\ell_{\lambda}}, \dots, 
\bar{\chi}^{a_{\lambda,n}}_{\ell_{\lambda}}$
(here we note that $\ell_{\lambda}>[\mbb{F}_{\lambda}:\mbb{F}_{\ell_{\lambda}}]\cdot n$). 
Furthermore, it is a direct summand of the semisimplification 
of a residual representation of $\rho_{\lambda}$ viewed as a 
$\mbb{Q}_{\ell_{\lambda}}$-representation.
Therefore, by Caruso's result on an upper bound for
tame inertia weights 
(\cite{Ca}) and the condition (1),
there exists a constant $C>0$,
which is independent of the choice of $\lambda\in \Lambda$, 
and an integer $0\le b_{\lambda,i}\le C$
such that 
\vspace{-1mm}
\[
(\sharp)\quad b_{\lambda,i}\equiv a_{\lambda,i}\ \mrm{mod}\ \ell_{\lambda}-1
\]
\vspace{-1mm}
for any $i$
(recall that $\ell_{\lambda}$ does not divide the discriminant of $K$).
Now 
we claim that 
the set $\{b_{\lambda,1},b_{\lambda,2},\dots ,b_{\lambda,n}\}$
is independent of the choice of $\lambda\in \Lambda$ large enough. 
Denote by $S$ the ramification set of $(\rho_{\lambda})_{\lambda}$.
Take a $v_0\notin S$ and decompose
$\mrm{det}(T-\rho_{\lambda}(\mrm{Fr}_{v_0}))
=\prod^{n}_{j=1}(T-\alpha_{v_0,j})$.
By conditions (2)$'$ and $(\sharp)$,
we have the congruence 
$\prod^{n}_{j=1}(T-\alpha_{v_0,j})
\equiv 
\prod^{n}_{j=1}(T-q^{b_{\lambda,j}}_{v_0})
$
in $\bar {\mbb{F}}_{\lambda}[T]$.
If $\ell_{\lambda}$ is large enough (note that $\Lambda$ is an infinite set),
then we obtain that this congruence is in fact an equality in $E[T]$:
$
\prod^{n}_{j=1}(T-\alpha_{v_0,j})
= \prod^{n}_{j=1}(T-q^{b_{\lambda,j}}_{v_0}).
$ 
Therefore, 
the set $\{b_{\lambda,1},b_{\lambda,2},\dots ,b_{\lambda,n}\}$
is independent of the choice of $\lambda\in \Lambda$ with 
$\ell_{\lambda}$ large enough. This proves the claim.
We denote $\{b_{\lambda,1},b_{\lambda,2},\dots ,b_{\lambda,n}\}$
by $\{m_1,m_2,\dots ,m_n \}$ for such a $\lambda\in \Lambda$.
By the compatibility of $(\rho_{\lambda})_{\lambda}$, we obtain the equation
$
\mrm{det}(T-\rho_{\lambda}(\mrm{Fr}_v))
=\prod^{n}_{j=1}(T-q^{m_j}_v) 
$
for any $\lambda$ and $v\notin S_{\ell_{\lambda}}$.
Therefore, the representation $\rho_{\lambda}$ is isomorphic to 
$\chi_{\lambda}^{m_1}\oplus \chi_{\lambda}^{m_2}
\oplus \cdots \oplus \chi_{\lambda}^{m_n}$. 
By the compatibility of $(\rho_{\lambda})_{\lambda}$, this finishes the proof.
\end{proof}

\begin{corollary}
\label{ModSTR1}
Let $(\bar{\rho}_{\lambda})_{\lambda}$ 
be an $E$-rational strictly compatible system
of abelian semisimple 
mod $\lambda$ representations of $G_K$.
Suppose that, for infinitely many finite places $\lambda$ of $E$, 
any character associated with 
$\bar{\rho}_{\lambda}$
has the form $\e\bar{\chi}^a_{\lambda}$,
where $\e\colon G_K\to \bar{\mbb{F}}_{\lambda}^{\times}$ is a character 
unramified at all places of $K$ above $\ell_{\lambda}$.
Then there exist a finite extension $L$ of $K$ 
and integers $m_1, m_2, \dots, m_n$ such that, 
for any $\lambda$,
the representation $\bar{\rho}_{\lambda}$ is isomorphic to 
$\bar{\chi}_{\lambda}^{m_1}\oplus \bar{\chi}_{\lambda}^{m_2}
\oplus \cdots \oplus \bar{\chi}_{\lambda}^{m_n}$ 
on $G_L$.
\end{corollary}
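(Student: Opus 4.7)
The plan is to reduce to Theorem \ref{STR1} by lifting $(\bar{\rho}_{\lambda})_{\lambda}$ to an $E$-rational strictly compatible system of $\lambda$-adic representations arising from Hecke characters, and then applying Theorem \ref{STR1} to this lift. First, by Theorem \ref{Kh}, the system $(\bar{\rho}_{\lambda})_{\lambda}$ arises from $n$ Hecke characters $\psi_1,\dots,\psi_n$ of $K$. Let $\psi_{i,\lambda}\colon G_K\to E_{\lambda}^{\times}$ be the $\lambda$-adic character attached to $\psi_i$, and set $\rho_{\lambda}:=\bigoplus_{i=1}^{n}\psi_{i,\lambda}$. Then $(\rho_{\lambda})_{\lambda}$ is an $E$-rational strictly compatible system of $n$-dimensional semisimple abelian $\lambda$-adic representations of $G_K$ whose mod-$\lambda$ reduction (for the obvious lattice $\bigoplus \cO_{E_{\lambda}}$) is isomorphic to $\bar{\rho}_{\lambda}$.

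Next I would verify the hypotheses of Theorem \ref{STR1} for $(\rho_{\lambda})_{\lambda}$, using as the infinite set $\Lambda$ the one supplied by the hypothesis. Condition (2) of Theorem \ref{STR1} is exactly the hypothesis of Corollary \ref{ModSTR1}, since the characters associated with $\bar{\rho}_{\lambda}$ are precisely the mod-$\lambda$ reductions of the $\psi_{i,\lambda}$. Condition (3) follows from Proposition \ref{PropHecke}(2). For condition (1), Proposition \ref{PropHecke}(1) gives bounded Hodge-Tate weights; furthermore, each Hecke character $\psi_i$ has finite conductor, so for all but finitely many $\lambda$ every $\psi_{i,\lambda}$ is unramified (hence crystalline, and in particular semistable) at every place of $K$ above $\ell_{\lambda}$. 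Replacing $\Lambda$ by its cofinite intersection with this set of ``good'' $\lambda$ preserves infiniteness and secures condition (1).

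Applying Theorem \ref{STR1} then produces integers $m_1,\dots,m_n$ and a finite extension $L/K$ such that $\rho_{\lambda}|_{G_L}\cong \chi_{\lambda}^{m_1}\oplus\cdots\oplus\chi_{\lambda}^{m_n}$ for every $\lambda$. Reducing modulo $\lambda$ gives $\bar{\rho}_{\lambda}|_{G_L}\cong \bar{\chi}_{\lambda}^{m_1}\oplus\cdots\oplus\bar{\chi}_{\lambda}^{m_n}$ for every $\lambda$, which is the conclusion sought.

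The main obstacle I expect is verifying condition (1) of Theorem \ref{STR1} for the lift $(\rho_{\lambda})_{\lambda}$, namely the existence of a place above $\ell_{\lambda}$ at which $\rho_{\lambda}$ is semistable with uniformly bounded Hodge-Tate weights. This is the point where one has to invoke standard facts about the $\ell$-adic realizations of Hecke characters: their Hodge-Tate weights are determined by the infinity type (hence uniformly bounded), and away from the finite conductor of $\psi_1,\dots,\psi_n$ they are crystalline at every place above $\ell_{\lambda}$.
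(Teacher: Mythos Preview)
Your approach is essentially the same as the paper's: lift via Theorem \ref{Kh} to a compatible system of $\lambda$-adic representations coming from Hecke characters, check the hypotheses of Theorem \ref{STR1} (using Proposition \ref{PropHecke} for (1) and (3)), and then reduce modulo $\lambda$. The only point where the paper is slightly more careful is that the Hecke characters produced by Theorem \ref{Kh} may take values in a finite extension $E'$ of $E$, so the lifted system is stated as $E'$-rational rather than $E$-rational; this is harmless for the argument but worth noting.
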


\begin{proof}
By Theorem \ref{Kh}, 
we know that there exist a finite extension $E'$ of $E$
and an $E'$-rational abelian semisimple compatible system 
$({\rho}_{\lambda'})_{\lambda'}$
of $\lambda'$-adic representations of $G_K$
which arises from Hecke characters
such that $({\rho}_{\lambda'})_{\lambda'}$ 
is a lift of $(\bar{\rho}_{\lambda})_{\lambda}$,
that is, $\bar{\rho}_{\lambda'}$ is isomorphic to 
$\bar{\rho}_{\lambda}\otimes \mbb{F}_{\lambda'}$
for any $\lambda$ and any finite place $\lambda'$ of $E'$ above $\lambda$.
By standard properties of compatible systems of 
Galois representations arising from Hecke characters, 
we see that $(\rho_{\lambda'})_{\lambda'}$ satisfies 
all the assumptions (1), (2) and (3) in Theorem \ref{STR1}. 
Consequently we obtain the desired result. 
\end{proof}

\begin{corollary} 
\label{STRCor1} 
Let $(\rho_{\lambda})_{\lambda}$ be an $E$-rational strictly compatible system
of $n$-dimensional semisimple $\lambda$-adic representations 
of $G_K$. 
Suppose that

\begin{enumerate}
\item[$(\mrm{i})$] 
$(\bar{\rho}_{\lambda})^{\mrm{ss}}$ is abelian for almost all $\lambda$;

\item[$(\mrm{ii})$] 
for infinitely many $\lambda$, 
any character associated with 
$(\bar{\rho}_{\lambda})^{\mrm{ss}}$
has the form $\e\bar{\chi}^a_{\lambda}$,
where 
$\e\colon G_K\to \mbb{F}_{\lambda}^{\times}$ is a character
unramified at all places of $K$ above $\ell_{\lambda}$.
\end{enumerate}

\noindent
Then there exist integers $m_1, m_2,\dots , m_n$ 
and a finite extension $L$ of $K$
such that,
for any $\lambda$,
the representation $\rho_{\lambda}$ is isomorphic to 
$\chi_{\lambda}^{m_1}\oplus \chi_{\lambda}^{m_2}
\oplus \cdots \oplus \chi_{\lambda}^{m_n}$ 
on $G_L$.
\end{corollary}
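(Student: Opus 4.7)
The plan is to apply Corollary~\ref{ModSTR1} to the system of mod $\lambda$ semisimplifications and then promote its conclusion to the $\lambda$-adic level using the compatibility of $(\rho_\lambda)_\lambda$ together with a Chebotarev argument.

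First I would observe that, restricted to the cofinite set of places $\lambda$ of $E$ for which $(\bar\rho_\lambda)^{\mrm{ss}}$ is abelian (such a set exists by hypothesis (i)), the family $((\bar\rho_\lambda)^{\mrm{ss}})_\lambda$ is an $E$-rational strictly compatible system of abelian semisimple mod $\lambda$ representations of $G_K$: compatibility is inherited from $(\rho_\lambda)_\lambda$ via reduction of the polynomials $f_v(T)$ modulo $\lambda$ together with the Brauer--Nesbitt theorem. Hypothesis (ii) is exactly what Corollary~\ref{ModSTR1} demands, so it yields a finite extension $L/K$ and integers $m_1,\dots,m_n$ such that $(\bar\rho_\lambda)^{\mrm{ss}}|_{G_L}\simeq \bar\chi_\lambda^{m_1}\oplus\cdots\oplus \bar\chi_\lambda^{m_n}$ for every $\lambda$.

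Next I would transfer this information to the integral characteristic polynomials of the compatible system. For a finite place $v$ of $L$ outside the ramification set $S$, the monic polynomial $f_v(T)\in E[T]$ is integral at every $\lambda$ coprime to the residue characteristic of $v$, and by the previous step $f_v(T)\equiv \prod_{i=1}^n(T-q_v^{m_i})\pmod{\lambda}$ for the infinitely many $\lambda$ provided by Corollary~\ref{ModSTR1}. Since a nonzero element of $E$ is a $\lambda$-adic unit for all but finitely many $\lambda$, the difference $f_v(T)-\prod_{i=1}^n(T-q_v^{m_i})$ must vanish coefficientwise, giving the exact equality $f_v(T)=\prod_{i=1}^n(T-q_v^{m_i})$ in $E[T]$ for every $v\notin S$.

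Finally, for each $\lambda$, the restriction $\rho_\lambda|_{G_L}$ is semisimple (restriction of a semisimple representation in characteristic zero to a finite-index subgroup is semisimple), and by the previous step it has the same Frobenius characteristic polynomial as $\chi_\lambda^{m_1}\oplus\cdots\oplus\chi_\lambda^{m_n}$ at every place $v$ of $L$ with $v\notin S_{\ell_\lambda}$. Chebotarev density and the Brauer--Nesbitt theorem then force $\rho_\lambda|_{G_L}\simeq \chi_\lambda^{m_1}\oplus\cdots\oplus \chi_\lambda^{m_n}$, as required. The genuinely new input is Corollary~\ref{ModSTR1}; the remaining two steps are standard, and the only delicate point is the simple divisibility observation in $E$ that lifts the mod $\lambda$ congruences at infinitely many $\lambda$ to an identity in $E[T]$.
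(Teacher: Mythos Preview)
Your argument is correct and follows the same line as the paper's proof, which simply says ``apply Corollary~\ref{ModSTR1} to the compatible system $((\bar{\rho}_{\lambda})^{\mrm{ss}})_{\lambda}$.'' You have made explicit the two steps the paper leaves implicit (passing from the mod~$\lambda$ conclusion of Corollary~\ref{ModSTR1} to an identity $f_v(T)=\prod_i(T-q_v^{m_i})$ in $E[T]$, and then to the $\lambda$-adic isomorphism via Chebotarev and Brauer--Nesbitt), but these are the same manipulations already carried out at the end of the proof of Theorem~\ref{STR1}, so there is no genuine difference in approach.
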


\begin{proof}
The result follows immediately by applying Corollary \ref{ModSTR1}
to the compatible system
$((\bar{\rho}_{\lambda})^{\mrm{ss}})_{\lambda}$.
\end{proof}

Let $\lambda$ and $\lambda'$ be finite places of $E$ of 
different residual characteristics.
Let $\rho_{\lambda}$ 
be an $E$-rational $n$-dimensional semisimple 
$\lambda$-adic representations of $G_K$ with ramification set $S$.
Suppose that 
there exists 
an semisimple 
$\lambda'$-adic representation $\rho_{\lambda'}$ of $G_K$ 
such that 
\[
\mrm{det}(T-\rho_{\lambda}(\mrm{Fr}_v))=
\mrm{det}(T-\rho_{\lambda'}(\mrm{Fr}_v))
\]
for any $v\notin S_{\ell_{\lambda}}\cup S_{\ell_{\lambda'}}$.
In the spirit of Fontaine-Mazur's 
``Main Conjecture'',  
we hope\footnote{
In fact $\rho_{\lambda}$ and $\rho_{\lambda'}$
shall come from an algebraic variety $X$ and their
ramification set $S$ shall be ``bad primes'' of $X$.} that 
$\rho_{\lambda'}$ is crystalline for any finite place 
$v'$ of $K$ above $\ell_{\lambda'}$ when the residual characteristic of $\lambda'$ 
is prime to that of any place in $S$.
However to prove this hope seems not to be easy.
If $\rho_{\lambda}$ is abelian,
the hope is true by Theorem \ref{Kh}.
If we consider representations which is pure,
we can improve the statement (1) of Theorem \ref{STR1} as below.
(If the hope is true, 
it is not difficult to prove the proposition below 
without the assumption of pureness by the similar method of 
the proof of Theorem \ref{STR1}.)

\begin{proposition} 
\label{STR2} 
Let $(\rho_{\lambda})_{\lambda}$ 
be an $E$-rational strictly compatible system
of $n$-dimensional geometric semisimple 
$\lambda$-adic representations of $G_K$.
Suppose that $(\rho_{\lambda})_{\lambda}$ is pure.
Suppose that 
there exists an infinite set $\Lambda$ 
of finite places of $K$ which satisfies the following:
\begin{enumerate}
\item[$(1)$] For any $\lambda\in \Lambda$, there exists a place $v$
of $K$ above $\ell_{\lambda}$ such that

\begin{enumerate}
\item[$(a)$] there exists a constant $C>0$ which is independent 
of the choice of $\lambda\in \Lambda$ such that 
$[I_v:\mfr{L}_v(\rho_{\lambda})]<C$.
Here $\mfr{L}_v(\rho_{\lambda})$ is the inertial level of 
$\rho_{\lambda}$ at $v$ $($see Section 2.1$)$.
\item[$(b)$] there exist integers $w_1\le w_2$ which are independent 
of the choice of $\lambda\in \Lambda$ such that the Hodge-Tate weights 
of $\rho_{\lambda}|_{G_v}$ are in $[w_1,w_2]$.
\end{enumerate}

\item[$(2)$] For any $\lambda\in \Lambda$, 
$(\bar{\rho}_{\lambda})^{\mrm{ss}}$ is abelian and 
any character associated with 
$\bar{\rho}_{\lambda}$
has the form $\e\bar{\chi}^a_{\lambda}$,
where 
$\e\colon G_K\to \bar{\mbb{F}}_{\lambda}^{\times}$ is a character 
unramified at all places of $K$ above $\ell_{\lambda}$.

\item[$(3)$] For any $\lambda\in \Lambda$, 
the Artin conductor of $(\bar{\rho}_{\lambda})^{\mrm{ss}}$
is bounded independently of the choice of $\lambda\in \Lambda$.
\end{enumerate}
Then there exist an integer $m$ 
and a finite extension $L$ of $K$
such that,
for any $\lambda$,
the representation $\rho_{\lambda}$ is isomorphic to 
$(\chi^m_{\lambda})^{\oplus n}$ 
on $G_L$.
\end{proposition}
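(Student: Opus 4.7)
The plan is to mimic the proof of Theorem \ref{STR1}, replacing the genuine semi-stability hypothesis by a finite base change that absorbs the bounded tame ramification, and then collapsing the resulting multiset of exponents by the pureness assumption. First I would carry out the same preliminary reductions as in the proof of Theorem \ref{STR1}: by shrinking $\Lambda$ I may assume $\ell_\lambda$ is coprime to the discriminant of $K$ and is larger than both $[E:\mbb{Q}]\cdot n$ and $C$; by the Artin conductor bound (3) I pass to the strict ray class field of $K$ associated with the bounding ideal and thereby strengthen (2) to the form (2)$'$: every character associated with $\bar\rho_\lambda$ is a pure power $\bar\chi_\lambda^a$.

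Second, I would convert (1a) into honest semi-stability after a further finite base change. The image of $\mfr{L}_v(\rho_\lambda)$ in the tame quotient $I_v^{\mrm{tame}}$ is an open subgroup of index less than $C$, and for $\ell_\lambda>C$ this index is automatically coprime to $\ell_\lambda$; consequently there is a tame extension of $K_v$ of degree at most $(C-1)!$ over which $\rho_\lambda$ becomes semi-stable at the chosen place $v$. Since the tame extensions of a local field of bounded degree are in finite supply, I would fix once and for all a single finite extension $L/K$, for instance one containing $K(\mu_{C!})$ together with enough uniformizer extractions at a controlled auxiliary set of places, such that $\rho_\lambda|_{G_L}$ becomes semi-stable at some place above $\ell_\lambda$ for all $\lambda\in\Lambda$ with $\ell_\lambda$ sufficiently large. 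The Hodge-Tate weights at those places remain in a uniformly bounded interval because $[L:K]$ is fixed.

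Third, over $L$ I would run the argument of Theorem \ref{STR1} verbatim: Caruso's upper bound on tame inertia weights, combined with $(2)'$, produces integers $b_{\lambda,i}$ lying in a bounded interval $[0,C']$ with $b_{\lambda,i}\equiv a_{\lambda,i}\pmod{\ell_\lambda-1}$; compatibility at an unramified place $v_0\notin S$ yields the congruence $\prod_j(T-\alpha_{v_0,j})\equiv\prod_j(T-q_{v_0}^{b_{\lambda,j}})\pmod\lambda$, which lifts to an equality in $E[T]$ for $\ell_\lambda$ large; hence the multiset $\{b_{\lambda,1},\dots,b_{\lambda,n}\}$ stabilises to a set $\{m_1,\dots,m_n\}$ of integers independent of $\lambda$. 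The pureness hypothesis now pins all these exponents to the same value: letting $w$ denote the common Frobenius weight, each $q_{v_0}^{m_j}$, being a positive real appearing among the Frobenius eigenvalues at $v_0$, must equal $q_{v_0}^{w/2}$, forcing $m_j=w/2$ for every $j$ and in particular $w$ even; I set $m:=w/2\in\mbb{Z}$. Then $\mrm{det}(T-\rho_\lambda(\mrm{Fr}_v))=(T-q_v^m)^n$ for every $v\notin S_{\ell_\lambda}$, and Chebotarev combined with the Brauer-Nesbitt theorem yields $\rho_\lambda|_{G_L}\cong(\chi_\lambda^m)^{\oplus n}$ for every $\lambda$.

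I expect the principal difficulty to be the uniform base change in the second step. The tame local extensions over which the individual $\rho_\lambda$ acquire semi-stability depend on $\lambda$ not only through the residue characteristic but also through the local uniformizer, and producing one global extension $L/K$ that works simultaneously for all $\lambda\in\Lambda$ without blowing up global ramification is where the argument is most delicate. The remaining steps are straightforward adaptations of the ingredients already deployed in the proof of Theorem \ref{STR1}, the only genuinely new input being pureness, which is used solely to collapse the tuple $(m_1,\dots,m_n)$ to a single integer $m$.
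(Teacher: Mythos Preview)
Your second step contains a genuine gap that you yourself flag as the ``principal difficulty'' but do not actually resolve. The place $v$ in condition (1) lies above $\ell_\lambda$, so as $\lambda$ ranges over the infinite set $\Lambda$ the residue characteristic of $v$ runs through infinitely many distinct primes. For each such $\lambda$ the local extension over which $\rho_\lambda$ acquires semi-stability must be ramified at that particular $v$; there is no finite extension $L/K$ that is ramified at infinitely many places, so no single global $L$ can absorb the required tame ramification simultaneously for all $\lambda\in\Lambda$. Your appeal to ``the tame extensions of a local field of bounded degree are in finite supply'' is correct for a fixed local field but irrelevant here, since the local field $K_v$ itself varies with $\lambda$. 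Adjoining $\mu_{C!}$ contributes nothing at $v$ once $\ell_\lambda>C$, and ``uniformizer extractions at a controlled auxiliary set of places'' would have to be performed at infinitely many places.

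The paper circumvents this obstruction rather than solving it. Instead of seeking a global base change, one works with the varying local extensions $L_w/K_v$ and observes only that their absolute ramification indices $e_w$ are uniformly bounded. Caruso's bound over $L_w$ then yields congruences of the form $b'_{\lambda,i}\equiv e_w\,a_{\lambda,i}\pmod{\ell_\lambda-1}$; clearing the dependence on $e_w$ by passing to a single multiple $e$ divisible by all possible $e_w$, one obtains control not over the Frobenius eigenvalues $\alpha_{v,j}$ themselves but over their $e$-th powers: $\prod_j(T-\alpha_{v,j}^e)=\prod_j(T-q_v^{b_{\lambda,j}})$. Pureness then forces all $b_{\lambda,j}$ to a common value $b$, but one still does not know that $b/e$ is an integer, nor that the $\alpha_{v,j}$ are themselves powers of $q_v$ rather than $q_v^{b/e}$ times roots of unity. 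This is handled by the Kisin--Lehrer device: after adjoining an $e$-th root of the cyclotomic character over a further extension, the twist of $\rho_{\lambda_0}$ has all Frobenius eigenvalues roots of unity, hence finitely many possible characteristic polynomials, and Chebotarev produces a finite $L/K$ over which the twist is trivial. Geometricity of $\rho_{\lambda_0}$ finally forces $m:=b/e\in\mbb{Z}$. Thus pureness is used not merely to collapse a tuple of exponents, but more essentially to salvage an argument that only sees the eigenvalues up to $e$-th powers.
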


\begin{proof}

Most parts of the first paragraph of this proof 
will proceed by the similar method as the proof of
Theorem \ref{STR1} and hence we will often omit precise arguments.
First we may assume that, for any $\lambda\in \Lambda$,

\begin{enumerate}
\item[(2)$'$] 
any character associated with 
$\bar{\rho}_{\lambda}$ 
has the form $\bar{\chi}^a_{\lambda}$
\end{enumerate}
\noindent
and furthermore,
$\rho_{\lambda}|_{G_v}$ has Hodge-Tate weights in $[0,r]$ for any $\lambda$
and $v$ as in the condition (1). 
Here $r$ is a positive integer which is independent of the choice of $\lambda\in \Lambda$.
Suppose $\lambda$ is a finite place in $\Lambda$.
Let $\bar{\chi}_{\lambda}^{a_{\lambda,1}}, 
\bar{\chi}_{\lambda}^{a_{\lambda,2}}, \dots ,
\bar{\chi}_{\lambda}^{a_{\lambda,n}}$ be 
all the characters associated with $\bar{\rho}_{\lambda}$.
Taking a finite place $v$ as in the condition (1),
there exists a finite extension $L_w$ of $K_v$
such that 
$\rho_{\lambda}|_{G_{L_w}}$ is semi-stable and $[L_w:K_v]\le C$.
If we denote by $e_w$ the absolute ramification index of  $L_w$,   
then it follows $e_w\le C[K:\mbb{Q}]$, and 
Caruso's result on an upper bound for
tame inertia weights 
(\cite{Ca}) implies that 
there exists an integer $0\le b'_{\lambda,i}\le e_wr$
which satisfies
$
b'_{\lambda,i}\equiv e_wa_{\lambda,i}\ 
\mrm{mod}\ \ell_{\lambda}-1.
$ 
Consequently, 
we see that there exist integers $e>0$ and $D>0$, 
which are independent of the choice of $\lambda\in \Lambda$
and  
$
b_{\lambda,i}\equiv ea_{\lambda,i}\ 
\mrm{mod}\ \ell_{\lambda}-1
$ 
for some integer $b_{\lambda,i}\in [0,D]$.
Take any $v\notin S_{\ell_{\lambda}}$ and decompose
$\mrm{det}(T-\rho_{\lambda}(\mrm{Fr}_v))
=\prod^{n}_{j=1}(T-\alpha_{v,j})$.
Then, by the similar arguments as the proof of Theorem \ref{STR1},
we can show that 
$\prod^{n}_{j=1}(T-\alpha^e_{v,j})
=\prod^{n}_{j=1}(T-q^{b_{\lambda,j}}_v)$
if we take $\lambda\in \Lambda$
with $\ell_{\lambda}$ large enough.
Since $(\rho_{\lambda})_{\lambda}$ is pure,
we have 
\vspace{-1mm} 
\[
\prod^{n}_{j=1}(T-\alpha^e_{v,j})
=\prod^{n}_{j=1}(T-q^b_v)
\]
\vspace{-1mm} 
for some integer $b$.
It follows from the compatibility of $(\rho_{\lambda})_{\lambda}$
that the above equation holds 
for any $\lambda$ (which may not be in $\Lambda$) 
and $v\notin S_{\ell_{\lambda}}$.

In the argument below, we use
the method of the proof of Proposition 1.2 of \cite{KL}.
Fix $\lambda$ and denote it by $\lambda_0$.
Take a finite extension $K'$ of $K$ 
such that 
there exists a continuous character 
$\chi^{1/e}_{\lambda_0}\colon G_{K'} \to E^{\times}_{\lambda_0}$
which has values in the integer ring of 
$E_{\lambda_0}$ and 
$(\chi^{1/e}_{\lambda_0})^e=\chi_{\lambda_0}$. 
Replace this $K'$ with $K$.
Then we know that, for any $v\notin S_{\ell_{\lambda_0}}$,
all the roots of 
$\mrm{det}(T-\rho'_{\lambda_0}(\mrm{Fr}_v))$
are roots of unity, where 
$\rho'_{\lambda_0}$ is the twist of 
$\rho_{\lambda_0}$ by $(\chi^{1/e}_{\lambda_0})^{-b}$. 
Since there are only finitely many such roots of unity, 
there are only finitely many possibilities for the characteristic 
polynomial of $\mrm{Fr}_v$.
Hence the function which takes $g\in G_K$ to 
$\mrm{det}(T-\rho'_{\lambda_0}(g))\in E[T]$ is continuous and takes 
only finitely many values by Chebotarev's density theorem.
It follows that the set 
$\{g\in G_K\mid \mrm{det}(T-\rho'_{\lambda_0}(g))=(T-1)^n\}$
is an open subset of $G_K$, which contains the identity map of $\bar K$.
Hence there exists a finite extension $L$ of $K$ such that 
$G_L\subset \{g\in G_K\mid \mrm{det}(T-\rho'_{\lambda_0}(g))=(T-1)^n\}$.
Then we see that 
$\rho_{\lambda_0}$ is isomorphic to 
$((\chi^{1/e}_{\lambda_0})^{b})^{\oplus n}$ on $G_L$.
Since $\rho_{\lambda_0}$ is geometric, 
we know that $b/e=:m$ is an integer and we finish the proof
by the compatibility of $(\rho_{\lambda})_{\lambda}$.
\end{proof}


\section{Rasmussen-Tamagawa Conjecture}

We continue to use same notation as in the previous section.
Let $g\ge 0$ be an integer.
\begin{definition}
We denote by $\mcal{A}(K,g,\ell)$
the set of $K$-isomorphism classes 
of $g$-dimensional abelian varieties $A$ over $K$
which satisfy the following: 

\vspace{1mm}

(RT$_{\mathrm{\ell}}$)
      $K(A[\ell])$ is an $\ell$-extension of $K(\mu_{\ell})$.

\vspace{1mm}

(RT$_{\mathrm{red}}$) 
       The abelian variety $A$ has good reduction
       away from $\ell$ over $K$.
\end{definition}

\noindent
By (RT$_{\mathrm{red}}$), 
the set $\mcal{A}(K,g,\ell)$ is a finite set (Theorem 5 of \cite{Fa} and 1.\ Theorem of \cite{Za}).
Rasmussen and Tamagawa conjectured in \cite{RT} 
that for any $\ell$ large enough,
this set is in fact empty (see Conjecture \ref{RT} in Introduction).
The following results 
on the Rasmussen-Tamagawa Conjecture
are known:
\vspace{1mm}

\noindent 
(i)  (\cite{RT}, Theorem 2)
      If $K=\mbb{Q}$ and $g=1$,
      then the conjecture holds.

\vspace{1mm} 

\noindent 
(ii)  (\cite{RT}, Theorem 4)
      If $K$ is a quadratic number field other than the 
      imaginary quadratic fields of class number one 
      and $g=1$,
      then the conjecture holds. 

\vspace{1mm} 

\noindent
(iii) (\cite{Oz}, Corollary 4.5)
Let $\mcal{A}(K,g,\ell)_{\mrm{st}}$
be the set of $K$-isomorphism classes of 
abelian varieties in  
$\mcal{A}(K,g,\ell)$
with semi-stable reduction everywhere.
Then there exists an integer $C=C([K:\mbb{Q}],g)$,
depending only on $[K:\mbb{Q}]$ and $g$,
such that 
$\mcal{A}(K,g,\ell)_{\mrm{st}}$ is empty for any $\ell>C$
with $\ell \nmid d_K$.
Here $d_K$ is the discriminant of $K$.  

\noindent
(iv) (\cite{Ar}, Corollary 6.4 and  \cite{AM})
Let $K$ be a quadratic number field other than the 
imaginary quadratic fields of class number one.
Let $\mcal{A}(K,2,\ell)_{\mrm{QM}}$
be the set of $K$-isomorphism classes of 
QM-abelian surfaces in  
$\mcal{A}(K,2,\ell)$.
Then $\mcal{A}(K,2,\ell)_{\mrm{QM}}$ is empty for any $\ell$ large enough.

\vspace{1mm}

For an abelian variety $A$, 
denote by $\rho_{A,\ell}\colon G_K\to GL(T_{\ell}(A))\simeq GL_{2g}(\mbb{Z}_p)$ 
the representation 
determined by the action of $G_K$ on the $\ell$-adic Tate module $T_{\ell}(A)$ of $A$.
Consider the following properties:

\vspace{1mm}

(RT$_{\mathrm{\ell}}$)$'$ For some finite extension $L$ of $K$ 
         which is unramified at all places of $K$ above $\ell$,
         $L(A[\ell])$ is an $\ell$-extension of $L(\mu_{\ell})$.

\vspace{1mm}

(RT$_{\mathrm{ab}}$)  
The representation $\rho_{A,\ell}$
has an abelian image.

\vspace{1mm}

\noindent
It is clear that (RT$_{\mathrm{\ell}}$) implies (RT$_{\mathrm{\ell}}$)$'$.

\begin{definition}
We define sets $\mcal{A}(K,g,\ell)_{\mrm{ab}}$ and 
$\mcal{A}'(K,g,\ell)_{\mrm{ab}}$ of isomorphism 
classes of $g$-dimensional abelian varieties $A$ over $K$ as follows:

\noindent
(1) $[A]\in \mcal{A}(K,g,\ell)_{\mrm{ab}}$ if and only if 
$A$ satisfies (RT$_{\mathrm{\ell}}$), (RT$_{\mathrm{red}}$) and (RT$_{\mathrm{ab}}$).

\noindent
(2) $[A]\in \mcal{A}'(K,g,\ell)_{\mrm{ab}}$ if and only if 
$A$ satisfies (RT$_{\mathrm{\ell}}$)$'$ and (RT$_{\mathrm{ab}}$).
\end{definition}

\noindent
Clearly, we have $\mcal{A}(K,g,\ell)\supset
\mcal{A}(K,g,\ell)_{\mrm{ab}}\subset 
\mcal{A}'(K,g,\ell)_{\mrm{ab}}$.
Note that 
abelian varieties in $\mcal{A}'(K,g,\ell)_{\mrm{ab}}$
are not forced the reduction hypothesis 
(RT$_{\mathrm{red}}$).
Hence  
$\mcal{A}'(K,g,\ell)_{\mrm{ab}}$ may be 
infinite (but the author does not know 
an example such that $\mcal{A}'(K,g,\ell)_{\mrm{ab}}$ is infinite).

\section{Proof of Theorem\ \ref{abRT}}

In this section, we use same notation as
in the previous section.
First we study the structure of $A[\ell]$ for 
an abelian variety $A$ in
$\mcal{A}'(K,g,\ell)_{\mrm{ab}}$.
Let $A$ be any $g$-dimensional abelian variety over $K$.
We denote by $\bar{\rho}_{A,\ell}\colon G_K\to GL(A[\ell])\simeq GL_{2g}(\mbb{F}_p)$
the representation 
determined by the action of $G_K$ on $A[\ell]$.
Consider the following properties:

\vspace{1mm} 

(RT$_{\mrm{mod}}$) $(\bar{\rho}_{A,\ell})^{\mrm{ss}}$
              conjugates to the direct sum of $n$ characters        
              which are of the form $\bar{\chi}^a_{\ell}$.

\vspace{1mm} 

(RT$_{\mrm{mod}}$)$'$ $(\bar{\rho}_{A,\ell})^{\mrm{ss}}$ is abelian and
                 characters associated with 
                 $\bar{\rho}_{A,\ell}$
                 are of the form $\e \bar{\chi}^a_{\ell}$,
                 where $\e\colon G_K\to \bar{\mbb{F}}^{\times}_{\ell}$
                 is a continuous character which is 
                 unramified at all places above $\ell$.

\vspace{1mm} 

\noindent
The condition (RT$_{\ell}$) is equivalent 
to the condition (RT$_{\mrm{mod}}$)
by the Lemma below.
Hence the $K$-isomorphism class $[A]$ of $g$-dimensional 
abelian variety $A$ over $K$ is in $\mcal{A}(K,g,\ell)$ if and only if 
$A$ satisfies (RT$_{\mrm{mod}}$) and (RT$_{\mrm{red}}$).

\begin{lemma}
\label{structure}
Let $A$ be a $g$-dimensional abelian variety over $K$.

\begin{enumerate}
\item[$(1)$] The abelian variety $A$ satisfies 
    $(\mrm{RT}_{\ell})$ if and  
    only if $A$ satisfies $(\mrm{RT}_{\mrm{mod}})$.

\item[$(2)$] Suppose that the abelian variety $A$ satisfies
       $(\mrm{RT}_{\mrm{ab}})$.
      Then $A$ satisfies  $(\mrm{RT}_{\ell})'$ if and  
      only if $A$ satisfies $(\mrm{RT}_{\mrm{mod}})'$.
\end{enumerate}
\end{lemma}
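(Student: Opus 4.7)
The plan is to analyze $G := \mrm{Gal}(K(A[\ell])/K) = \mrm{Im}(\bar{\rho}_{A,\ell})$ together with its normal subgroup $H := \mrm{Gal}(K(A[\ell])/K(\mu_\ell))$. Note that $K(\mu_\ell) \subset K(A[\ell])$ via the Weil pairing, so $G/H$ is canonically identified with a subgroup of $\mbb{F}_\ell^\times$ through the mod-$\ell$ cyclotomic character $\bar{\chi}_\ell$, and condition (RT$_\ell$) is precisely the statement that $H$ is an $\ell$-group. The whole proof is a translation between the structure of this short exact sequence and the shape of $(\bar{\rho}_{A,\ell})^{\mrm{ss}}$.

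For part (1), the direction (RT$_{\mrm{mod}}$) $\Rightarrow$ (RT$_\ell$) is immediate: if $(\bar{\rho}_{A,\ell})^{\mrm{ss}} \cong \bigoplus \bar{\chi}_\ell^{a_i}$, then this semisimplification is trivial on $G_{K(\mu_\ell)} = \ker \bar{\chi}_\ell$, so every element of $\bar{\rho}_{A,\ell}(G_{K(\mu_\ell)})$ has $1$ as its only eigenvalue, is therefore unipotent in $GL_{2g}(\mbb{F}_\ell)$, and hence has $\ell$-power order, so $H$ is an $\ell$-group. For the converse, I would use that since $H$ is a normal $\ell$-subgroup of $G$ acting on the $\mbb{F}_\ell$-vector space $A[\ell]$, iteratively setting $V_{i+1}/V_i := (A[\ell]/V_i)^H$ produces a $G$-stable flag (each level is $G$-stable by normality of $H$) on whose graded pieces $H$ acts trivially. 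On each piece the action factors through $G/H$, which has order coprime to $\ell$; by Maschke the piece semisimplifies into characters $\psi : G/H \to \bar{\mbb{F}}_\ell^\times$, and since $G/H$ embeds into the cyclic group $\mbb{F}_\ell^\times$ through $\bar{\chi}_\ell$, each such $\psi$ is necessarily of the form $\bar{\chi}_\ell^a$. Assembling the graded pieces gives (RT$_{\mrm{mod}}$).

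For part (2), I would reduce both directions to part (1) applied over a well-chosen base extension. For ``$\Rightarrow$'': given $L/K$ as in (RT$_\ell$)', part (1) applied to $A_L$ yields $(\bar{\rho}_{A,\ell}|_{G_L})^{\mrm{ss}} \cong \bigoplus \bar{\chi}_\ell^{a_i}$. Since (RT$_{\mrm{ab}}$) forces $\bar{\rho}_{A,\ell}$ to have abelian image, its semisimplification diagonalizes over $\bar{\mbb{F}}_\ell$ as $\bigoplus \chi_i$ for characters $\chi_i$ of $G_K$; matching the two decompositions on $G_L$ and relabeling, each $\e_i := \chi_i \bar{\chi}_\ell^{-a_i}$ is trivial on $G_L$ and hence unramified at all places of $K$ above $\ell$, because $L/K$ is. For ``$\Leftarrow$'': given $\chi_i = \e_i \bar{\chi}_\ell^{a_i}$ as in (RT$_{\mrm{mod}}$)', each $\e_i$ has finite image in $\bar{\mbb{F}}_\ell^\times$ (continuity plus discreteness of the target), so the fixed field $L$ of $\bigcap_i \ker \e_i$ is a finite extension of $K$, unramified above $\ell$ by the hypothesis on the $\e_i$. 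Over $G_L$ the characters reduce to $\bar{\chi}_\ell^{a_i}|_{G_L}$, so $A_L$ satisfies (RT$_{\mrm{mod}}$), and part (1) applied over $L$ yields (RT$_\ell$)' with this $L$.

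The main point requiring care will be the converse of part (1): one must verify that the characters of the graded pieces are specifically powers of $\bar{\chi}_\ell$ rather than arbitrary characters of the abelian quotient $G/H$. This relies crucially on the explicit embedding $G/H \hookrightarrow \mbb{F}_\ell^\times$ induced by $\bar{\chi}_\ell$ together with the cyclicity of $\mbb{F}_\ell^\times$. The character bookkeeping in part (2) is then routine once (RT$_{\mrm{ab}}$) is used to simultaneously diagonalize the relevant semisimplifications.
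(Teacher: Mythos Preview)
Your proof is correct and matches the paper's approach. For part (2) your argument is essentially identical to the paper's: in both directions one passes to the finite extension $L$ cut out by the $\varepsilon_i$ (or given by hypothesis) and invokes part (1) over $L$, then reads off that $\varepsilon_i = \psi_i\bar{\chi}_\ell^{-a_i}$ is unramified above $\ell$. For part (1) the paper simply cites Lemma~3 of \cite{RT} and omits the argument; the proof you supply---using that the normal $\ell$-subgroup $H$ yields a $G$-stable flag with $H$ acting trivially on the graded pieces, and that every character of the cyclic group $G/H\hookrightarrow \mbb{F}_\ell^\times$ is a power of $\bar{\chi}_\ell$---is the standard one and is presumably what \cite{RT} does.
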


\begin{proof}
The assertion (1) follows from the arguments of the proof of 
Lemma 3 in \cite{RT}
and thus we omit the proof.
Suppose that an abelian variety $A$ satisfies the condition
$(\mrm{RT_{ab}})$ and denote by $\psi_1,\dots ,\psi_{2g}$  
characters associated with $\bar{\rho}_{A,\ell}$.
If $A$ satisfies (RT$_{\mrm{mod}}$)$'$, then we have   
$\psi_i=\e_i\bar{\chi}^{a_i}_{\ell}$ for some integer $a_i$
where $\e_i\colon G_K\to \bar{\mbb{F}}^{\times}_{\ell}$
is a continuous character which is 
unramified at all places of $K$ above $\ell$.
Let $L$ be the composition field of all fields $\bar{K}^{\ker{\e_i}}$
for all $i$. Then $L$ is unramified at all places of $K$ above $\ell$. 
Since each $\psi_i|_{G_{L(\mu_{\ell})}}$ is trivial, we obtain
(RT$_{\mathrm{\ell}}$)$'$.
Conversely, suppose that 
(RT$_{\mathrm{\ell}}$)$'$ holds and take 
a field $L$ as in the statement of (RT$_{\mathrm{\ell}}$)$'$.
By (1),
we know that each $\psi_i|_{G_L}$ 
is equal to  $\bar{\chi}^{a_i}_{\ell}$
for some integer $a_i$.
Hence $\e_i:=\psi_i\cdot \bar{\chi}^{-a_i}_{\ell}\colon G_K\to \bar{\mbb{F}}^{\times}_{\ell}$ is unramified at 
all places above $\ell$ and this implies (RT$_{\mrm{mod}}$)$'$.
\end{proof}

We recall the following two propositions. 

\begin{proposition}[Faltings]
\label{FaCor}
Fix an integer $w$.
The set of isomorphism classes of semisimple 
$n$-dimensional $\ell$-adic representations 
$G_K\to GL_n(\mbb{Q}_{\ell})$ 
which are $\mbb{Q}$-integral 
with Frobenius weights $\le w$ outside $S$, is finite. 
\end{proposition}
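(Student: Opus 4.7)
The plan is to mirror Faltings' classical finiteness argument from his proof of the Shafarevich conjecture for abelian varieties. By compactness of the image, any continuous $\rho\colon G_K\to GL_n(\mbb{Q}_\ell)$ stabilizes a $\mbb{Z}_\ell$-lattice, so after conjugation I may assume $\rho$ takes values in $GL_n(\mbb{Z}_\ell)$. The $\mbb{Q}$-rationality hypothesis with ramification set $S$ implies $\rho$ is unramified outside $S_\ell$, hence factors through $\mrm{Gal}(K_{S_\ell}/K)$, where $K_{S_\ell}$ is the maximal extension of $K$ unramified outside $S_\ell$.

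The first substantive step is to bound the individual characteristic polynomials. By the Brauer-Nesbitt theorem, a semisimple representation is determined up to isomorphism by its character, and by Chebotarev density the character is determined by the values on Frobenius elements $\mrm{Fr}_v$ for $v\notin S_\ell$. The hypothesis gives $f_v(T):=\det(T-\rho(\mrm{Fr}_v))\in\mbb{Z}[T]$ with complex roots of absolute value at most $q_v^{w/2}$, so the coefficients of $f_v(T)$ (the elementary symmetric functions of the roots) are bounded integers. Hence only finitely many polynomials $f_v(T)$ are possible for each fixed $v$.

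The main obstacle is to show that finitely many primes $v$ already suffice to distinguish all such $\rho$. I would attack this through the residual and mod $\ell^k$ representations: $\bar\rho=\rho\bmod\ell$ factors through a finite Galois extension $L/K$ of degree dividing $|GL_n(\mbb{F}_\ell)|$, unramified outside $S_\ell$, and by Hermite-Minkowski only finitely many such extensions $L$ exist. Consequently the residual representations $\bar\rho$ lie in a finite set, and applying Chebotarev to the compositum of all candidate $L$'s shows that $\bar\rho$ is determined by the traces at a finite collection of Frobenii. The same argument applied to the coefficient rings $\mbb{Z}/\ell^k\mbb{Z}$ gives finiteness of the mod $\ell^k$ reductions for each individual $k$, since $|GL_n(\mbb{Z}/\ell^k\mbb{Z})|$ is a fixed finite number once $k$ is fixed.

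The subtlest point is the uniformity across $k$ in passing to the $\ell$-adic limit. Faltings' insight is that the combination of $\mbb{Q}$-integrality and the Frobenius weight bound imposes a genuinely global restriction on the traces (not merely a place-by-place restriction), so that a single finite collection of Frobenii—chosen using Chebotarev applied to a sufficiently large Galois extension whose existence is guaranteed by Hermite-Minkowski—is enough to separate all $\rho$ in the set simultaneously. Combined with the finiteness of possibilities at each such Frobenius, this yields the finiteness of the whole set of isomorphism classes.
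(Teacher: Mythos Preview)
The paper itself does not prove this proposition; it simply cites Faltings' original argument (Theorem~5 of \cite{Fa}) and the exposition in Lang \cite{La}, Chapter~VIII, \S5, Theorem~11. Your sketch is an attempt to reconstruct that argument, and its broad outline is correct: reduce to $GL_n(\mbb{Z}_\ell)$, use $\mbb{Q}$-integrality plus the weight bound to get finitely many possible characteristic polynomials at each fixed place, and then argue that a finite set of places suffices to separate all representations.

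There is, however, a genuine gap at the step you yourself flag as subtlest. You write that ``the combination of $\mbb{Q}$-integrality and the Frobenius weight bound imposes a genuinely global restriction on the traces \dots\ so that a single finite collection of Frobenii \dots\ is enough to separate all $\rho$.'' This is not the mechanism. Integrality and the weight bound only control the characteristic polynomial place by place; they say nothing about why knowing traces at finitely many Frobenii pins down the entire trace function. Your mod~$\ell^k$ argument for each fixed $k$ is fine, but it produces a set of places that grows with $k$, and you have not explained why one can take a limit.

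The missing idea is purely algebraic. Given $\rho\colon G_K\to GL_n(\mbb{Z}_\ell)$, let $R\subset M_n(\mbb{Z}_\ell)$ be the $\mbb{Z}_\ell$-subalgebra generated by $\rho(G_K)$; it is a free $\mbb{Z}_\ell$-module of rank at most $n^2$. By Nakayama's lemma, any set of elements of $\rho(G_K)$ whose images span $R/\ell R$ already generates $R$ over $\mbb{Z}_\ell$. Since $G_K\to (R/\ell R)^\times$ factors through a finite Galois group of order dividing $\lvert GL_n(\mbb{F}_\ell)\rvert$, Hermite--Minkowski and Chebotarev give a \emph{fixed} finite set of places $v_1,\dots,v_r$ (independent of $\rho$) such that the $\rho(\mrm{Fr}_{v_i})$ always generate $R$. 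Linearity of trace then shows that $\mrm{tr}\,\rho$ is determined by its values at these $\mrm{Fr}_{v_i}$, and Brauer--Nesbitt finishes. To compare two representations one runs the same argument inside $M_n(\mbb{Z}_\ell)\times M_n(\mbb{Z}_\ell)$. This Nakayama step is the heart of Faltings' proof and is what your sketch is missing.
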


\begin{proof}
The Proposition follows from the proof of Theorem 5 in \cite{Fa}.
See also \cite{La}, Chapter VIII, Section 5, Theorem 11.
\end{proof}

\begin{proposition}
[Raynaud's criterion of semi-stable reduction, \cite{Gr}, Proposition
 4.7]
\label{Racri}
Suppose $A$ is an abelian variety over a field $F$ with a 
discrete valuation $v$,
$n$ is a positive integer not divisible by the residue characteristic,
and the points of $A[n]$ are defined over an extension of $F$
which is unramified over $v$.
In particular, 
if $A$ is an arbitrary abelian variety over a number field $K$,
then $A$ has semi-stable reduction everywhere
over $K(A[12])=K(A[3],A[4])$.
\end{proposition}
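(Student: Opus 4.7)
The plan is to prove Raynaud's criterion via Grothendieck's theory of N\'eron models combined with the monodromy theorem from SGA 7, which is the classical route. Let $\mcal{A}$ denote the N\'eron model of $A$ over the valuation ring $\cO_v$, with identity component $\mcal{A}^0_s$ of the special fiber over the residue field of characteristic $p$. By Chevalley's theorem, $\mcal{A}^0_s$ fits into an exact sequence
\[
0 \to T \times U \to \mcal{A}^0_s \to B \to 0,
\]
with $B$ an abelian variety, $T$ a torus and $U$ a unipotent group; semi-stable reduction at $v$ is exactly the vanishing of $U$.

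The first step is to translate this into a Galois-theoretic condition via Grothendieck's monodromy theorem: for any prime $\ell \ne p$ the inertia group $I_v$ acts quasi-unipotently on $T_\ell(A)$, and $A$ has semi-stable reduction at $v$ if and only if this action is actually unipotent. It therefore suffices to produce a prime $\ell \mid n$ for which $I_v$ acts unipotently on $T_\ell(A)$.

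Fix such an $\ell$. Since $A[n]$ is defined over an extension of $F$ unramified at $v$, the image $\rho_\ell(I_v)$ lies in the principal congruence subgroup of $\mathrm{GL}_{2g}(\mbb{Z}_\ell)$ of level $\ell^{v_\ell(n)}$. For $n \geq 3$, the Minkowski--Serre lemma asserts that this principal congruence subgroup is torsion-free. On the other hand, quasi-unipotence supplies a finite-index normal subgroup of $\rho_\ell(I_v)$ acting unipotently, with finite quotient; since any homomorphism from a torsion-free group to a finite group is trivial, $\rho_\ell(I_v)$ itself must act unipotently, which gives semi-stable reduction.

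For the ``In particular'' statement, let $L = K(A[3], A[4])$ and let $w$ be any place of $L$ of residue characteristic $p$. Since $p$ cannot divide both $3$ and $4$, we may pick $m \in \{3, 4\}$ with $p \nmid m$; then $A[m] \subset A[12]$ is trivially rational over $L_w$ and $m \geq 3$, so the first part applies and yields semi-stable reduction at $w$. The use of the integer $12 = 3 \cdot 4$ is dictated precisely by the need to produce some $m \geq 3$ coprime to every possible residue characteristic; this is the main subtle point, since the $n=2$ case of the first part fails (a reflection of $-1 \equiv 1 \pmod{2}$, i.e., the mod $2$ principal congruence subgroup of $\mathrm{GL}_N(\mbb{Z}_2)$ has torsion).
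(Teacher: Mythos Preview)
The paper does not prove this proposition; it is quoted directly from SGA~7~I, Expos\'e~IX, so there is no in-paper argument to compare against. Your outline follows the standard Grothendieck--Raynaud route, but it contains a genuine gap.

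The assertion ``any homomorphism from a torsion-free group to a finite group is trivial'' is simply false: already $\mbb{Z}_\ell$ is torsion-free yet surjects onto every $\mbb{Z}/\ell^j\mbb{Z}$, so knowing that $\rho_\ell(I_v)$ sits inside a torsion-free group and has a finite-index unipotent subgroup does not by itself force $\rho_\ell(I_v)$ to equal that subgroup. What torsion-freeness of the principal congruence subgroup $\Gamma(\ell^k)\subset GL_{2g}(\mbb{Z}_\ell)$ (for $\ell^k\geq 3$) actually buys you is an \emph{element-wise} statement: if $M\in\Gamma(\ell^k)$ and some power $M^m$ is unipotent, then $M$ itself is unipotent. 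One clean way to see this uses the $\ell$-adic logarithm: since $M\equiv 1\pmod{\ell^k}$ with $\ell^k\geq 3$, the series $\log M=\sum_{i\geq 1}(-1)^{i-1}(M-1)^i/i$ converges in $M_{2g}(\mbb{Z}_\ell)$ and satisfies $\exp(\log M)=M$; then $M^m$ unipotent forces $m\log M$, hence $\log M$, to be nilpotent, so $M=\exp(\log M)$ is unipotent. Apply this to each $\rho_\ell(\sigma)$ for $\sigma\in I_v$, using quasi-unipotence to supply the power $m$. A related loose end: when you ``fix such an $\ell$'' dividing $n$, you must choose one with $\ell^{v_\ell(n)}\geq 3$; this is possible precisely when $n\geq 3$, a hypothesis Raynaud's criterion needs but which the paper's printed statement accidentally omits (along with the conclusion of its first sentence).
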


For an integer $g> 0$,
put
\[
D_g:=\sharp GL_{2g}(\mbb{Z}/3\mbb{Z})\cdot 
\sharp GL_{2g}(\mbb{Z}/4\mbb{Z}). 
\]
If $\rho\colon G_K\to GL_{2g}(\mbb{Q}_{\ell})$ is an abelian representation,
then, for any integer $k$, 
we denote by $\rho^{k}$ the representation 
$G_K\to GL_{2g}(\mbb{Q}_{\ell})$ 
which is defined by $\rho^k(s):=(\rho(s))^k$
for any $s\in G_K$.
With this notation,
we obtain the following lemma
which plays an important role in the proof of Theorem \ref{abRT}
to construct a good compatible system.

\begin{lemma}
\label{Dsq}
Let $g>0$ be an integer and $\ell_0$ a prime number.
Let $\mcal{A}_{\ell_0}$ be the set 
of isomorphism classes of representations $\rho\colon G_K\to GL_{2g}(\mbb{Q}_{\ell_0})$ which are
isomorphic to $\rho^{D_g}_{A,\ell_0}$ 
for some  $g$-dimensional abelian variety $A$ over $K$ 
such that $K(A[\ell_0^{\infty}])$ is an abelian extension of $K$.  
Then $\mcal{A}_{\ell_0}$ is finite.
\end{lemma}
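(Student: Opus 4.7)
The plan is to apply Faltings' finiteness result (Proposition \ref{FaCor}) to the representations $\rho^{D_g}_{A,\ell_0}$, by establishing three properties uniformly in $A$: semisimplicity, a ramification set depending only on $K$ and $\ell_0$, and $\mbb{Q}$-integrality with uniformly bounded Frobenius weights.

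Semisimplicity is formal: $\rho_{A,\ell_0}$ is semisimple by Faltings' theorem on abelian varieties, and abelian by hypothesis, so over $\bar{\mbb{Q}}_{\ell_0}$ it diagonalises as $\chi_1 \oplus \cdots \oplus \chi_{2g}$; hence $\rho^{D_g}_{A,\ell_0} \cong \chi_1^{D_g} \oplus \cdots \oplus \chi_{2g}^{D_g}$ is again semisimple.

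The key step is controlling the ramification. Set $M := K(A[12])$, so $[M:K]$ divides $D_g$, and by Raynaud's criterion (Proposition \ref{Racri}) $A$ has semi-stable reduction everywhere over $M$. The restriction $\rho_{A,\ell_0}|_{G_M}$ is still abelian, and remains semisimple because restriction of a characteristic-zero semisimple representation to a finite-index subgroup is semisimple; hence it splits over $\bar{\mbb{Q}}_{\ell_0}$ into one-dimensional characters. At a place $w\nmid \ell_0$ of $M$, inertia acts unipotently by semi-stability, but a one-dimensional unipotent character is trivial; so $\rho_{A,\ell_0}|_{G_M}$ is unramified at $w$, and by N\'eron--Ogg--Shafarevich $A$ has good reduction at $w$. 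Back over $K$, for $v\nmid \ell_0$ and $w\mid v$ in $M$, the quotient $I_v/I_w$ embeds into $\mrm{Gal}(M_w/K_v)$, which has order dividing $[M:K]\mid D_g$; so every element of $\rho_{A,\ell_0}(I_v)$ has order dividing $D_g$, and therefore $\rho^{D_g}_{A,\ell_0}$ is unramified at every $v\nmid \ell_0$.

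For the Frobenius data at $v\nmid\ell_0$, choose a lift $\sigma\in G_v$ of $\mrm{Fr}_v$; then $\sigma^{D_g}\in G_{M_w}$ for a place $w\mid v$ of $M$, and it projects to $\mrm{Fr}_w^{D_g/f(w/v)}$ in the residue-field Galois group (an integer power, as $f(w/v)\mid[M:K]\mid D_g$). Since $A$ has good reduction at $w$, the Weil conjectures give eigenvalues of $\rho_{A,\ell_0}(\mrm{Fr}_w)$ of absolute value $|k(w)|^{1/2}$, hence eigenvalues of $\rho^{D_g}_{A,\ell_0}(\mrm{Fr}_v) = \rho_{A,\ell_0}(\mrm{Fr}_w)^{D_g/f(w/v)}$ of absolute value $q_v^{D_g/2}$, and a characteristic polynomial in $\mbb{Z}[T]$. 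Applying Proposition \ref{FaCor} with ramification set the fixed set of places of $K$ above $\ell_0$ and Frobenius weight $D_g$ then yields the finiteness of $\mcal{A}_{\ell_0}$. The main obstacle is the upgrade from semi-stable to good reduction over $M$, which is where the combination of abelianness and Faltings' semisimplicity is essential.
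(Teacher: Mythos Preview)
Your proof is correct and follows the same overall strategy as the paper: show that each $\rho^{D_g}_{A,\ell_0}$ is semisimple, unramified outside $\ell_0$, and $\mbb{Q}$-integral of Frobenius weight $D_g$, then apply Proposition~\ref{FaCor}. The ramification and Frobenius computations agree with the paper's, and your explicit upgrade from semi-stable to good reduction over $M=K(A[12])$ (unipotent inertia on a sum of characters is trivial) is exactly what underlies the paper's bare assertion that $A$ has potential good reduction everywhere.

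The one substantive difference is the semisimplicity argument. The paper invokes heavier machinery: since $\rho_{A,\ell_0}$ is abelian and geometric it is locally algebraic, hence comes from an algebraic homomorphism $\phi\colon S_{\mfr{m}}\to GL_{2g}$; then $\rho^{D_g}_{A,\ell_0}$ comes from $\phi$ composed with the multiplication-by-$D_g$ map on $S_{\mfr{m}}$, and any representation arising this way is semisimple. Your argument bypasses this entirely: $\rho_{A,\ell_0}$ is semisimple (Faltings) and abelian, so over $\bar{\mbb{Q}}_{\ell_0}$ it diagonalises as $\chi_1\oplus\cdots\oplus\chi_{2g}$, whence $\rho^{D_g}_{A,\ell_0}\otimes\bar{\mbb{Q}}_{\ell_0}\cong\chi_1^{D_g}\oplus\cdots\oplus\chi_{2g}^{D_g}$ is visibly semisimple, and semisimplicity descends. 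This is a genuine simplification—the Serre group is not needed for this lemma.
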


\begin{proof}
If $A$ is an abelian variety over $K$ such that 
$K(A[\ell_0^{\infty}])$ is an abelian extension of $K$,
then $A$ has potential good reduction everywhere. 
Putting $L:=K(A[12])$, 
such an abelian variety $A$ has good reduction everywhere over $L$
by Proposition \ref{Racri}. 
Since $[L:K]$ divides $D_g$,
the representation  $\rho^{D_g}_{A,\ell_0}$ 
is unramified outside $\ell_0$
for any  $g$-dimensional abelian variety $A$ over $K$ such that 
$K(A[\ell_0^{\infty}])$ is an abelian extension of $K$.  
Take any finite place $v$ of $K$ not above $\ell_0$.
Let $v_L$ be a finite place of $L$ above $v$ and 
denote by $f$ the extension degree of $\mbb{F}_{v_L}$ over $\mbb{F}_{v}$,
where $\mbb{F}_{v_L}$ and $\mbb{F}_{v}$ are residue fields of $v_L$ and $v$, respectively.
Noting that $L$ is a Galois extension of $K$ and $A$ has good reduction everywhere over $L$, 
we see that $D_g/f$ is an integer
and obtain the equation
\[
\mrm{det}(T-\rho^{D_g}_{A,\ell_0}(\mrm{Fr}_v))=
\mrm{det}(T-(\rho_{A,\ell_0}(\mrm{Fr}_{v_L}))^{D_g/f}).
\]
Since $A$ has good reduction everywhere over $L$,
the polynomial
$\mrm{det}(T-\rho_{A,\ell_0}(\mrm{Fr}_{v_L}))$ 
has rational integer coefficients
and hence so is $\mrm{det}(T-(\rho_{A,\ell_0}(\mrm{Fr}_{v_L}))^{D_g/f})$.
Consequently, 
the representation $\rho^{D_g}_{A,\ell_0}$ is $\mbb{Q}$-integral with
Frobenius weight $D_g/2$ outside 
the set of finite places of $K$ above $\ell_0$.
Therefore, by Proposition \ref{FaCor},
it is enough to prove that
the representation $\rho^{D_g}_{A,\ell_0}$ is semisimple.
Note that it has already known that $\rho_{A,\ell_0}$ is semisimple (Theorem 3 of \cite{Fa}).
Since $\rho_{A,\ell_0}$ is abelian and geometric in the sense of  \cite{FM}, 
the representation $\rho_{A,\ell_0}$  is locally algebraic in the sense of \cite{Se}
(see also Proposition of Section 6 in \cite{FM}).
Therefore, by (MT 1) of \cite{Ri},
there exists a modulus of definition $\mathfrak{m}$ and 
an algebraic homomorphism 
$\phi\colon S_{\mathfrak{m}}\to GL_{2g}$ over $\mbb{Q}$ such that 
the $\ell_0$-representation induced by $\phi$ is isomorphic to $\rho_{A,\ell_0}$
Here, the definition of the commutative algebraic group $S_{\mathfrak{m}}$
over $\mbb{Q}$ is given in Chapter II of \cite{Se}. 
Note that any $\ell_0$-adic representation coming from 
an algebraic morphism $S_{\mathfrak{m}}\to GL_{2g}$
is automatically semisimple.
Since  $\rho^{D_g}_{A,\ell_0}$
comes from the composition 
$S_{\mathfrak{m}}\overset{D_g}{\to} 
S_{\mathfrak{m}}\overset{\phi}{\to} GL_{2g}$
where $S_{\mathfrak{m}}\overset{D_g}{\to} 
S_{\mathfrak{m}}$ is the multiplication by $D_g$ map,
we obtain the fact that $\rho^{D_g}_{A,\ell_0}$ is semisimple. 
\end{proof}

\begin{proof}[Proof of Theorem \ref{abRT}]
First we note that, 
if an abelian variety $A$ over $K$ satisfies (RT$_{\mathrm{ab}}$),
then $\rho_{A,\ell'}$ is abelian for any prime number $\ell'$
(cf. \cite{Se}, Chapter III, Section 2.3, Corollary 1). 
Fix a prime number $\ell_0$ and denote by $\mcal{A}_{\ell_0}$ the set as in Lemma \ref{Dsq}.
Assume that there exist infinitely many prime numbers $\ell$ 
such that $\mcal{A}'(K,g,\ell)_{\mrm{ab}}$ is not empty.
For every such $\ell$, 
we obtain the $\ell_0$-adic representation $\rho^{D_g}_{A,\ell_0}$ 
which is in the set $\mcal{A}_{\ell_0}$,
where $A$ is an abelian variety
whose isomorphism class is in the set $\mcal{A}'(K,g,\ell)_{\mrm{ab}}$.
By Lemma \ref{Dsq}, we see that
there exists a representation $\rho_{\ell_0}$ in $\mcal{A}_{\ell_0}$ such that 
for infinitely many $\ell$ and 
$[A]\in \mcal{A}'(K,g,\ell)_{\mrm{ab}}$,
$\rho^{D_g}_{A,\ell_0}$ is isomorphic to $\rho_{\ell_0}$. 
In particular, we know the fact that
the representation  $\rho_{\ell_0}$ extends to a $\mbb{Q}$-integral 
strict compatible system $(\rho_{\ell})_{\ell}$
of $2g$-dimensional abelian semisimple $\ell$-adic representations 
of $G_K$.
Furthermore, 
for infinitely many prime numbers $\ell$, 
the characters associated with a residual representation $\bar{\rho}_{\ell}$ of
$\rho_{\ell}$ 
are of the form $\e\bar{\chi}^{a}_{\ell}$
by Lemma \ref{structure}, 
where $\e\colon G_K\to \bar{\mbb{F}}^{\times}_{\ell}$ is a continuous character
which is unramified at all places of $K$ above $\ell$.
Applying Theorem \ref{STR1} or Corollary \ref{STRCor1},
we see that there exist integers $m_1,\dots ,m_{2g }$ and 
a finite extension $L$ of $K$ such that  
$\rho_{\ell_0}$ is isomorphic to 
$\chi_{\ell_0}^{m_1}\oplus \chi_{\ell_0}^{m_2}
\oplus \cdots \oplus \chi_{\ell_0}^{m_{2g}}$ 
on $G_L$.
In particular, for some prime number $\ell$ and 
$[A] \in \mcal{A}'(K,g,\ell)_{\mrm{ab}}$,
$\rho^{D_g}_{A,\ell_0}$ is isomorphic to
$\chi_{\ell_0}^{m_1}\oplus \chi_{\ell_0}^{m_2}
\oplus \cdots \oplus \chi_{\ell_0}^{m_{2g}}$ 
on $G_L$.
Therefore, looking at the eigenvalues of images of 
a Frobenius element (at some place) of  
 $\rho^{D_g}_{A,\ell_0}$ and 
$\chi_{\ell_0}^{m_1}\oplus \chi_{\ell_0}^{m_2}
\oplus \cdots \oplus \chi_{\ell_0}^{m_{2g}}$, 
we know that
$D_g/2=m_1=m_2=\cdots =m_{2g}$.
Since $\rho^{D_g}_{A,\ell_0}$ has Hodge-Tate weights $0$ and $D_g$
at a place of $L$ above $\ell_0$,
this is a contradiction.
\end{proof}


\begin{thebibliography}{10}

\bibitem[Ar]{Ar}
Keisuke Arai, 
\emph{Galois images and modular curves}, 
Algebraic Number Theory and Related topics 2010 -- a volume in RIMS-Bessatsu Series --, in press.

\bibitem[AM]{AM}
Keisuke Arai and Fumiyuki Momose, 
\emph{Algebraic points on Shimura curves of $\Gamma_0(p)$-type}, 
preprint.


\bibitem[Ca]{Ca}
Xavier Caruso, 
\emph{Repr\'esentations semi-stables de torsion dans le case 
$er<p-1$}, 
J. Reine Angew. Math. {\bf 594} (2006),
35--92.

\bibitem[De1]{De1}
Pierre Deligne,
\emph{La conjecture de Weil I}, 
Publ. Math. Inst. Hautes \'Etudes Sci. {\bf 43} (1974),
273--308.

\bibitem[De2]{De2}
Pierre Deligne,
\emph{La conjecture de Weil II}, 
Publ. Math. Inst. Hautes \'Etudes Sci. {\bf 52} (1980),
137--252.

\bibitem[Fa]{Fa}
Gerd Faltings, 
\emph{Finiteness theorems for abelian varieties 
over number fields},
Arithmetic Geometry, Chapter II,
9--27.

\bibitem[FM]{FM}
Jean-Mark Fontaine and Barry Mazur, 
\emph{Geometric Galois Representations},
Elliptic curves, modular
forms, and Fermat's last theorem (Hong Kong, 1993), 
Internat. Press, Cambridge, MA (1995),
41--78.

\bibitem[Gr]{Gr}
Alexander Grothendieck, ``Mod\`eles de N\'eron et monodromie, in Groupes de monodromie
en g\'eometrie alg\'ebrique, SGA 7'', 
\textit{Lecture Notes in Mathematics} {\bf 288} (1972), 
313--523.


\bibitem[Kh1]{Kh1}
Chandrashekhar Khare, 
{\it Compatible system of mod $p$ Galois representations
and Hecke characters},
Math. Res. Lett. {\bf 10} (2003),
71--83.

\bibitem[Kh2]{Kh2}
Chandrashekhar Khare, 
{\it Reciprocity law for compatible systems of 
abelian ${\rm mod}\,p$ Galois representations},
Canad. J. Math. {\bf 57} (2005),  no. 6,
1215--1223.

\bibitem[KL]{KL}
Mark Kisin and Gus Lehrer,
{\it Eigenvalues of Frobenius and Hodge numbers},
Pure Appl. Math. Q. {\bf 2} (2006),
497--518.

\bibitem[La]{La}
Serge Lang,
{\it Algebraic number theory},
Second edition, Graduate Texts in Mathematics {\bf 110},
Springer-Verlag 
(1994).

\bibitem[Ma]{Ma}
Barry Mazur,
\emph{Rational isogenies of prime degree},
Invent. Math. {\bf 44} (1978),
129--162.


\bibitem[Mo]{Mo}
Fumiyuki Momose,
\emph{Isogenies of prime degree over number fields},
Compos. Math. {\bf 97} (1995),
329--348.


\bibitem[Oz]{Oz}
Yoshiyasu Ozeki,
\emph{Non-existence of certain Galois representations
with a uniform tame inertia weight},
Int. Math. Res. Not. IMRN {\bf 2011} (2011), no. 11,
2377--2395.

\bibitem[Ri]{Ri}
Kenneth A. Ribet
\emph{Galois Action on Division Points of Abelian Varieties with Real Multiplications},
Amer. J. Math. {\bf 98} (1976), no. 3,
751--804.


\bibitem[RT]{RT}
Christopher Rasmussen and Akio Tamagawa, 
\emph{A finiteness conjecture on abelian varieties with
constrained prime power torsion}, Math. Res. Lett. {\bf 15} (2008),
1223--1231.

\bibitem[Se]{Se}
Jean-Pierre Serre,
\emph{Abelian $l$-adic representations and elliptic curves},
second ed., 
Advanced Book Classics, 
Addison-Wesley Publishing Company Advanced Book Program,
Redwood City, CA, 1989,
With the collaboration of Willem Kuyk and John Labute.

\bibitem[ST]{ST}
Jean-Pierre Serre and John Tate,
\emph{Good reduction of abelian varieties},
Ann. of Math. (2) {\bf 8} (1986),
492--517.

\bibitem[Za]{Za}
Yuri G. Zarhin,
\emph{A finiteness theorem for unpolarized abelian varieties over
number fields with prescribed places of bad reduction},
Invent. Math. {\bf 79} (1985), 
309--321.
\end{thebibliography}
\end{document}